\newcommand{\Supp}{\text{Supp}}
\newcommand{\Aut}{\text{Aut}}
\newcommand{\Ord}{\text{Ord}}
\newcommand{\Orb}{\text{Orb}}
\newcommand{\Fix}{F}
\newcommand{\indinv}[2]{i\left(#1; #2\right)}
\newcommand{\indinvL}[2]{i^+\left(#1; #2\right)}
\newcommand{\indinvS}[2]{i^0\left(#1; #2\right)}
\newcommand{\indcard}[2]{i\left(#1, #2\right)}
\newcommand{\indcardleq}[2]{\indcard{#1}{\le#2}}
\newcommand{\ind}[1]{i\left(#1\right)}
\newcommand{\defeq}{\coloneqq}
\newcommand{\e}{\text{e}}
\newcommand{\id}{\text{id}}
\newcommand{\symm}{\triangle}
\newcommand{\BB}{\mathcal{B}}
\newcommand{\IIL}[2]{\mathcal{I}^+\!\left(#1; #2\right)}
\newcommand{\IIS}[2]{\mathcal{I}^0\!\left(#1; #2\right)}
\newcommand{\II}[1]{\mathcal{I}\left(#1\right)}
\newcommand{\IIinv}[2]{\II{#1;#2}}
\newcommand{\KK}{\mathcal{K}}
\newcommand{\MM}{\mathbb{M}}
\newcommand{\PP}{\mathcal{P}}
\renewcommand{\SS}{\mathbb{S}}
\newcommand{\TT}{\mathbb{T}}
\newcommand{\ignore}[1]{}
\newtheorem{theorem}{Theorem}[section]
\newtheorem{lemma}[theorem]{Lemma}
\newtheorem{conjecture}[theorem]{Conjecture}
\newtheorem{corollary}[theorem]{Corollary}
\title{Asymptotics of symmetry in matroids}
\author{Rudi Pendavingh and Jorn van der Pol}
\thanks{This research was supported by the Netherlands Organisation for Scientific Research (NWO) grant 623.001.211.}
\address{\normalfont Eindhoven University of Technology, Eindhoven, The Netherlands. {\em Contact:} \{R.A.Pendavingh, J.G.v.d.Pol\}@tue.nl.}
\begin{document}

\maketitle

\begin{abstract}
	We prove that asymptotically almost all matroids have a trivial automorphism group, or an automorphism group generated by a single transposition. Additionally, we show that asymptotically almost all sparse paving matroids have a trivial automorphism group.
\end{abstract}

\section{Introduction}

Let~$M$ be a matroid. An {\em automorphism} of~$M$ is a permutation~$\pi$ of its ground set, such~$X$ is a basis if and only if~$\pi(X)$ for all subsets~$X$ of the ground set. The automorphisms of a matroid form a group under composition, the {\em automorphism group}, for which we write~$\Aut(M)$.

$\Aut(M)$ always contains the identity permutation, $\id$, which maps any element to itself. If~$\Aut(M) = \{\id\}$, the automorphism group is called {\em trivial}. If this is the case, the matroid~$M$ is called asymmetric, and otherwise it is called symmetric.

Let~$\PP$ be a matroid property, and consider the fraction of matroids satisfying~$\PP$,
\begin{equation*}
	\frac{|\{M \in \MM(n) : \text{$M$ has property~$\PP$}\}|}{|\MM(n)|},
\end{equation*}
where~$\MM(n)$ denotes the set of matroids with ground set~$E=\{1,2,\ldots, n\}$. If this fraction tends to~$1$ as~$n\to\infty$, then we say that~$\PP$ holds for {\em asymptotically almost all} matroids.

\begin{conjecture}\label{conj:asymmetric}
	Asymptotically almost all matroids are asymmetric.
\end{conjecture}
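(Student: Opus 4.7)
The plan is to prove Conjecture~\ref{conj:asymmetric} by a union bound over non-identity permutations of the ground set. Writing $\Fix(\pi) \defeq \{M \in \MM(n) : \pi \in \Aut(M)\}$, a matroid is symmetric if and only if some $\pi \neq \id$ lies in its automorphism group, so it suffices to show
\begin{equation*}
	\sum_{\pi \in S_n \setminus \{\id\}} |\Fix(\pi)| = o(|\MM(n)|).
\end{equation*}

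The counts $|\Fix(\pi)|$ are controlled by the observation that a $\pi$-invariant matroid on $[n]$ has, for each $r$, its rank-$r$ basis family expressible as a union of $\pi$-orbits on $\binom{[n]}{r}$. Since this orbit count is strictly less than $\binom{n}{r}$ whenever $\pi \neq \id$, one expects $\log_2 |\Fix(\pi)|$ to be smaller than $\log_2 |\MM(n)|$ by an amount depending on the cycle type of $\pi$, and the goal is to make this gap large enough to absorb the union bound.

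I would split by cycle type. For permutations $\pi$ with a relatively large support, a positive fraction of the $\pi$-orbits on $\binom{[n]}{r}$ (for $r \approx n/2$) have size at least $2$, so the entropy reduction is of order $\binom{n}{\lfloor n/2 \rfloor}/n$; since $\log_2 |\MM(n)|$ is of the same order, this comfortably dominates the crude $\log(n!) = O(n\log n)$ loss from the union bound. Fixed-point-free cycles of length $\geq 3$, and products of two or more disjoint transpositions, should yield to essentially the same analysis. The delicate case is a single transposition $\tau = (i\ j)$: only a $\Theta(1/\sqrt{n})$ fraction of the $r$-subsets for $r \approx n/2$ is $\tau$-fixed, so heuristically $\tau$-invariant matroids are exponentially rarer than general matroids, and this should survive a union bound over the $\binom{n}{2}$ transpositions.

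The main obstacle is turning the orbit heuristic into a rigorous upper bound, because $\pi$-invariance on the set of bases alone does not enforce the matroid axioms. Concretely, one has to adapt the known upper bound on $|\MM(n)|$, which proceeds via a reduction to sparse paving matroids, to a $\pi$-equivariant version. For large-support cycle types this appears tractable, but the transposition case is genuinely harder---and is precisely what forces the weaker formulation stated in the abstract. The cleanest route to the full conjecture would be to establish that asymptotically almost all matroids are sparse paving; combined with the sparse paving result announced in the abstract, this would immediately yield Conjecture~\ref{conj:asymmetric}.
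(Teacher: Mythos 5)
The statement is Conjecture~\ref{conj:asymmetric}, which the paper leaves open; the theorems announced in the abstract are explicit weakenings of it. Your final paragraph acknowledges this, so your submission is a plan, not a proof, and should be assessed as such.

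Your outline does track the paper's actual route to its partial results for $\Sigma_{\ge 3}$: a union bound over permutations, with the per-permutation count controlled by an entropy reduction coming from $\pi$-invariance, split by cycle type. One technical refinement you should be aware of: bounding $m(n;\pi)$ by counting $\pi$-invariant unions of orbits in $\binom{[n]}{r}$ is not directly usable, since the overwhelming majority of those orbits sit inside $\BB(M)$ and the savings there are not readily convertible into a bound on the number of matroids. The paper instead works with $\pi$-invariant stable sets in the Johnson graph $J(n,r)$, identified with the possible circuit-hyperplane sets $W(M)$; Lemma~\ref{lemma:indinv-small} gives $\log\indinv{J(n,r)}{\pi}\le(1-\varepsilon)\log s(n)$ for all $\pi\in\Sigma_{\ge 3}$, which beats the crude $n\log n$ union-bound cost, and Theorem~\ref{thm:thin-sufficient} (the $U(M)/W(M)$ machinery of~\cite{PvdP2016b}) converts this into $m(n;\Sigma_{\ge 3})=o(s(n))$. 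This is the precise form of the ``$\pi$-equivariant reduction to sparse paving'' you gesture at.

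For transpositions, two concrete corrections. First, your quantitative heuristic is off: for $\tau=(e,f)$ and $r\approx n/2$, the $\tau$-fixed $r$-sets (those containing both or neither of $e,f$) form roughly \emph{half} of $\binom{[n]}{r}$, not a $\Theta(1/\sqrt n)$ fraction. Second, the obstruction is sharper than ``make the orbit heuristic rigorous.'' The natural bound is $t(n,r;\tau)\le m(n-2,r-2)\,m(n-2,r)+3$, hence $\log t(n;\tau)\le 2\log m(n-2)$ (inequalities~\eqref{eq:t-r-upper} and~\eqref{eq:t-upper}). But $\log m(n)$ is only known to lie between $\frac{1}{n}\binom{n}{\lfloor n/2\rfloor}$ and $\frac{2+o(1)}{n}\binom{n}{\lfloor n/2\rfloor}$, and doubling the upper bound at $n-2$ lands essentially at the lower bound at $n$. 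So the entropy argument is inconclusive as it stands, and this constant-factor slack in the matroid-counting bounds---not a technical difficulty in implementing an orbit count---is what blocks the conjecture. The paper records this precisely: Lemma~\ref{lemma:t-liminf} yields only $\liminf t(n)/m(n)=0$, and Corollary~\ref{cor:limits} shows the conjecture would follow from tightening either constant. Your closing observation, that proving almost all matroids are (sparse) paving would settle the conjecture via Theorem~\ref{thm:main-sparsepaving}, is correct and is exactly the paper's Conjecture~\ref{conj:paving} and the remark following it.
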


The conjecture is due to Mayhew, Newman, Welsh, and Whittle~\cite{MayhewNewmanWelshWhittle2011}, who conjecture that asymptotically almost all unlabelled matroids are asymmetric, and show that this is equivalent to Conjecture~\ref{conj:asymmetric}. In this paper, we present two partial resolutions to this conjecture.

\begin{theorem}\label{thm:main-matroids}
	Asymptotically almost all matroids are asymmetric or have an automorphism group that is generated by a transposition.
\end{theorem}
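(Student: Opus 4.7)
The plan is to bound the number of matroids violating the conclusion---call these \emph{bad}---by a union bound over permutations and show the bound is $o(|\MM(n)|)$.

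First, I would reduce the problem to a union bound over a convenient set of permutations. A matroid $M$ is bad precisely when $\Aut(M)$ contains some $\pi$ that is neither $\id$ nor a single transposition. The non-trivial direction is: if $\Aut(M)\supsetneq\{\id,\tau\}$ for a transposition $\tau$, then any extra element $\sigma\in\Aut(M)$ is either already bad, or a transposition distinct from $\tau$, in which case $\sigma\tau$ is a $3$-cycle or a double transposition, hence bad. Writing $F(\pi)\defeq\{M\in\MM(n):\pi\in\Aut(M)\}$ and $B$ for the set of bad permutations of $[n]$, it therefore suffices to show $\sum_{\pi\in B} |F(\pi)| = o(|\MM(n)|)$.

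Next, I would bound $|F(\pi)|$ for $\pi\in B$. Since the bases of any $M\in F(\pi)$ must form a union of $\pi$-orbits on $\binom{[n]}{r(M)}$, the crude bound $|F(\pi)|\le(n+1)\cdot 2^{o(\pi)}$ holds, where $o(\pi)$ denotes the number of $\pi$-orbits on subsets of the relevant size. This is far too weak compared with the known lower bound $\log_2 |\MM(n)|\ge(1-o(1))\binom{n}{\lfloor n/2\rfloor}/n$ obtained from sparse paving matroids. The essential step is to lift the entropy/covering arguments responsible for $\log_2 |\MM(n)| \le O\!\left((\log n/n)\binom{n}{\lfloor n/2\rfloor}\right)$ to the $\pi$-symmetric setting, presumably via the orbit-level versions of the invariants $i$, $i^+$, $i^0$ and $\II{\cdot}$ introduced in this paper. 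The target estimate is of the form
\[
	\log_2 |F(\pi)| \;\le\; O\!\left(\frac{\log n}{n}\cdot o(\pi)\right),
\]
which specializes to the known matroid count when $\pi=\id$.

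With such a bound in hand, I would partition $B$ into conjugacy classes; for each cycle type one needs a saving $o(\pi)\le(1-c)\binom{n}{\lfloor n/2\rfloor}$ large enough that, even multiplied by the class size (a polynomial in $n$), the total contribution is $o(|\MM(n)|)$. Permutations with many non-fixed points are easy: $o(\pi)$ shrinks geometrically in the number of non-trivial cycles and the covering bound provides further slack. The main obstacle will be the ``nearly fixed'' permutations in $B$, particularly the double transpositions $(ab)(cd)$ and the $3$-cycles $(abc)$, which move only four or three elements. For these, $o(\pi)$ is only a constant fraction below $\binom{n}{\lfloor n/2\rfloor}$, while the conjugacy classes have sizes $\Theta(n^4)$ and $\Theta(n^3)$. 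Here the naive $2^{o(\pi)}$ dwarfs $|\MM(n)|$ by many orders of magnitude, and only the $(\log n)/n$ factor in the refined bound on $|F(\pi)|$ can make the estimate competitive; establishing and sharpening this bound for these near-transposition $\pi$ is where I expect the bulk of the technical work to lie.
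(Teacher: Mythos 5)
Your reduction to the set $\Sigma_{\ge 3}$ of permutations moving at least three elements is correct and matches the paper's, and the overall plan of a union bound over permutations is also the paper's plan. However, the core of your proposal---directly bounding $|F(\pi)|=m(n;\pi)$ by lifting the entropy/covering bound to the $\pi$-symmetric setting---has a serious quantitative obstruction at exactly the place you flag as ``where the bulk of the technical work lies.'' For a $3$-cycle or double transposition, $o(\pi)\approx\tfrac{1}{2}\binom{n}{\lfloor n/2\rfloor}$ or $\approx\tfrac{5}{8}\binom{n}{\lfloor n/2\rfloor}$, while the best known bounds on matroid numbers are $\frac{1-o(1)}{n}\binom{n}{\lfloor n/2\rfloor}\le\log m(n)\le \frac{2+o(1)}{n}\binom{n}{\lfloor n/2\rfloor}$. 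Plugging the known constant $2$ into a putative bound $\log|F(\pi)|\le \frac{C}{n}o(\pi)$ gives $\log|F(\pi)|\lesssim\frac{1}{n}\binom{n}{\lfloor n/2\rfloor}$, which is the \emph{lower} bound on $\log m(n)$---so $|F(\pi)|/m(n)$ cannot be shown to vanish without closing (or partially closing) the factor-of-two gap between the bounds, which is a major open problem. Your sketch asks for precisely such a strengthening without supplying it.

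The paper's route avoids this obstruction altogether. It does not bound $m(n;\pi)$ directly. Instead it uses the decomposition of a matroid $M$ into its circuit-hyperplanes $W(M)$ (which form a stable set in the Johnson graph $J(n,r)$) and the remaining non-bases $U(M)$, together with the black-box counting result Theorem~\ref{thm:thin-sufficient}: if for every rank $r$ the set $\{W(M):M\in\mathcal{M}\cap\MM(n,r)\}$ has size at most $s(n)^{1-\varepsilon}$, then $|\mathcal{M}\cap\MM(n)|=o(s(n))=o(m(n))$. This reduces Theorem~\ref{thm:main-matroids} to counting $\pi$-invariant stable sets of $J(n,r)$, which is done (Lemmas~\ref{lemma:indinv-sum}--\ref{lemma:indinv-small}) by splitting such a stable set into its fixed part $I^0\subseteq\Fix(\pi)$ and its moving part $I^+$: the fixed part lives in smaller Johnson graphs (so its count is controlled by $s(n-m)$), and the moving part is reconstructed from a small stable set of orbit representatives, to which Theorem~\ref{thm:ind-small} applies. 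This gives the needed multiplicative $\varepsilon$-saving relative to $\log s(n)$, with enough slack to absorb the union bound over all $|\Sigma_{\ge 3}|<n!$ permutations. The $U/W$ decomposition and Theorem~\ref{thm:thin-sufficient} are the crucial ideas missing from your proposal; without them, the near-transpositions you correctly identify as hard appear to be genuinely out of reach.
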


Write~$\SS(n)$ for the set of sparse paving matroids with ground set~$E=\{1,2,\ldots,n\}$. Analogous to the matroid case, we say that asymptotically almost all sparse paving matroids have property~$\PP$ if the fraction
\begin{equation*}
	\frac{|\{M \in \SS(n) : \text{$M$ has property~$\PP$}\}|}{|\SS(n)|}
\end{equation*}
converges to~$1$ as~$n\to\infty$.

\begin{theorem}\label{thm:main-sparsepaving}
	Asymptotically almost all sparse paving matroids are asymmetric.
\end{theorem}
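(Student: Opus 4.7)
The plan is a union bound over non-identity permutations: letting $\SS(n;\pi) \defeq \{M \in \SS(n) : \pi \in \Aut(M)\}$, it suffices to prove that
\[
\sum_{\pi \neq \id} |\SS(n;\pi)| = o(|\SS(n)|).
\]
The key structural fact I would exploit is that sparse paving matroids of rank~$r$ on~$[n]$ correspond bijectively to independent sets in the Johnson graph $J(n,r)$ via their sets of circuit-hyperplanes (equivalently non-bases), and that $\pi \in \Aut(M)$ if and only if this set is a union of $\pi$-orbits on $\binom{[n]}{r}$.

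The central case is a transposition $\pi = (ij)$. Its orbits on $\binom{[n]}{r}$ split into fixed $r$-sets (those containing both or neither of $i,j$) together with orbits $\{A \cup \{i\}, A \cup \{j\}\}$ of size~$2$. Each size-$2$ orbit consists of two Johnson-adjacent sets (symmetric difference $\{i,j\}$), so no such orbit can lie in the circuit-hyperplane set of a sparse paving matroid. Thus the non-basis set of any $(ij)$-invariant sparse paving matroid is contained in the fixed $r$-sets, and since these split into two Johnson-disconnected parts whose induced subgraphs are isomorphic to $J(n-2,r-2)$ and $J(n-2,r)$, independent sets factor, yielding
\[
|\SS(n;(ij))| \le |\SS(n-2)|^2.
\]
Using a Graham--Sloane-type estimate $\log_2 |\SS(n)| \sim \binom{n}{\lfloor n/2 \rfloor}/n$, together with $\binom{n-2}{\lfloor (n-2)/2 \rfloor} \sim \tfrac{1}{4}\binom{n}{\lfloor n/2 \rfloor}$, one finds $2 \log_2 |\SS(n-2)| \sim \tfrac{1}{2} \log_2 |\SS(n)|$, so that the contribution from all $\binom{n}{2}$ transpositions is $O(n^2) \cdot |\SS(n)|^{1/2 + o(1)} = o(|\SS(n)|)$.

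For a general non-identity $\pi$ with a cycle $(i_1, \ldots, i_k)$ of length $k \geq 2$, a similar orbit analysis applies: whenever $X \ni i_1$, $X \not\ni i_2$, and $X \setminus \{i_1\} \subseteq [n] \setminus \Supp(\pi)$, we have $X \symm \pi(X) = \{i_1, i_2\}$, forcing the orbit through $X$ to be forbidden. This restricts the $\pi$-invariant non-basis sets to ``almost fixed'' $r$-sets and gives a bound on $|\SS(n;\pi)|$ that improves as $|\Supp(\pi)|$ grows.

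I expect the main obstacle to be making the general-$\pi$ bound precise and balancing it against the roughly $n!$ non-identity permutations in the union bound. Two simplifications seem promising: first, reducing to the prime-order case by passing from $\pi$ to $\pi^{m/p}$ for a prime $p$ dividing $m=\Ord(\pi)$ (involutions generalize the transposition argument almost directly, while for odd prime order $p \geq 3$ the absence of transpositional sub-symmetries on each $p$-cycle restricts fixed sets more severely); second, organising the sum by $|\Supp(\pi)|$ and checking that the saving from each extra moved element outpaces the $n$-fold growth in the number of permutations. A subsidiary technical point is to verify that $|\SS(n)|$ is dominated by ranks near $n/2$ tightly enough that the rank-wise sum $\sum_r |\SS(n-s, r-t)| \cdot |\SS(n-s, r)|$ arising from larger supports still gives $o(|\SS(n)|)$.
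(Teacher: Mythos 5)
Your overall strategy---a union bound over non-identity permutations, handling transpositions separately from permutations with support of size $\ge 3$---matches the paper's architecture. For permutations moving at least $3$ elements, your idea of bounding $\pi$-invariant stable sets in $J(n,r)$, organising by support size, and reducing to prime order is in the same spirit as the paper's treatment (which bounds the ``fixed'' and ``moved'' parts of a $\pi$-invariant stable set separately and feeds the result into a small-class criterion).

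The gap is in the transposition case, which is the bottleneck. You correctly observe that a $\pi$-invariant stable set for $\pi=(e,f)$ is contained in the $\pi$-fixed vertices, and that these split into two Johnson-disconnected pieces isomorphic to $J(n-2,r-2)$ and $J(n-2,r)$, giving $\indinv{J(n,r)}{\pi} = \ind{J(n-2,r-2)} \cdot \ind{J(n-2,r)} \le s(n-2)^2$. But you then invoke the asymptotic $\log_2 s(n) \sim \tfrac{1}{n}\binom{n}{\lfloor n/2\rfloor}$ to conclude $s(n-2)^2 \le s(n)^{1/2+o(1)}$. That asymptotic is \emph{not known}: the available bounds are $\tfrac{1}{n}\binom{n}{\lfloor n/2\rfloor} \le \log s(n) \le \tfrac{2+o(1)}{n}\binom{n}{\lfloor n/2\rfloor}$, with a genuine factor-of-two gap between them (closing it is an open problem, essentially the question of the precise size of the largest stable set family in $J(n,\lfloor n/2\rfloor)$). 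Using only what is known, $2\log s(n-2)$ could be as large as $(1+o(1))\log s(n)$, which does not give $s(n-2)^2 = o(s(n))$, let alone enough room to absorb the $\binom{n}{2}$ transpositions. Indeed this is precisely why the paper's Section~4 cannot push Theorem~\ref{thm:main-matroids} to full asymmetry for general matroids, and why the transposition case for sparse paving matroids needs a separate idea. The paper's fix is a double-counting argument (Lemma~\ref{lemma:indinv-kbound}): pair each $\pi$-invariant stable set $I$ with a vertex $X$ from $V_e \cup V_f$ and use $F(I,X)=I\cup\{X\}\setminus N(X)$ to produce many non-$\pi$-invariant stable sets, giving the multiplicative saving
\begin{equation*}
	\indinv{J(n,r)}{\pi} \le \frac{r(n-r)}{2\binom{n-2}{r-1}}\,\ind{J(n,r)} \le \frac{n^2\sqrt{n}}{\Theta(2^n)}\,s(n)
\end{equation*}
for $r$ near $n/2$, which is exponentially small relative to $s(n)$ regardless of the unresolved constant in $\log s(n)$. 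Without a device like this, the transposition case does not close.
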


\section{Preliminaries}

\subsection{Matroid enumeration}

In this section, we collect some results on matroid enumeration that we require.

We write~$m(n)$ (resp.\ $s(n)$) for the number of matroids (resp.\ sparse paving matroids) on ground set~$[n]$. It is clear that~$s(n) \le m(n)$. A lower bound on the number of sparse paving matroids follows from~\cite[Theorem~1]{GrahamSloane1980},
\begin{equation}\label{eq:matroids-lowerbound}
	\log s(n) \ge \frac{1}{n} \binom{n}{\lfloor n/2\rfloor}.
\end{equation}
An upper bound that matches the lower bound up to a constant factor was obtained in~\cite{BPvdP2015},
\begin{equation}\label{eq:matroids-upperbound}
	\log m(n) \le \frac{2+o(1)}{n} \binom{n}{\lfloor n/2\rfloor}.
\end{equation}


In addition to~$\MM(n)$ and~$\SS(n)$, we write~$\MM(n,r)$ and~$\SS(n,r)$ for the set of matroids (resp.\ sparse paving matroids) on ground set~$[n]$ of rank~$r$, and we will write~$m(n,r)$ and~$s(n,r)$ for their respective cardinalities.

Every matroid~$M\in\MM(n,r)$ is determined by its set of bases, or equivalently its set of non-bases; the non-bases of a matroid of rank~$r$ are the dependent $r$-subsets of the ground set.
We distinguish two types of non-bases: those that are circuit-hyperplanes, and those that are not. Following the notation introduced in~\cite{PvdP2016b}, we will write~$W(M)$ for the set of circuit-hyperplanes of a matroid~$M$, and~$U(M)$ for the set of non-bases that are not circuit-hyperplanes. As~$M$ is determined by its non-bases, it is also determined by the pair~$(U(M), W(M))$.

In~\cite[Theorem~1.3]{PvdP2016b}, it is shown that there is a sequence of sets~$\mathcal{U}_n$ whose cardinality grows relatively slowly, such that~$U(M) \in \mathcal{U}_n$ for all but a vanishing fraction of matroids in~$\MM(n)$. This implies that if $W(M)$ takes its value in a small set for all matroids~$M$ as well, then the class is necessarily small. This is made more precise in the following theorem, which is a variant of ~\cite[Corollary~1.4]{PvdP2016b} that is obtained by replacing any occurrence of~$\log m(n)$ by~$\log s(n)$.

\begin{theorem}\label{thm:thin-sufficient}
	Let~$\mathcal{M}$ be a class of matroids, and let~$\varepsilon > 0$. If
	\begin{equation*}
		\log |\{W(M) : M \in \mathcal{M} \cap \MM(n,r)\}| \le (1-\varepsilon) \log s(n)
	\end{equation*}
	for all~$0 \le r \le n$, then~$|\mathcal{M} \cap \MM(n)| = o(s(n))$.
\end{theorem}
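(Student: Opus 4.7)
The plan is to mimic the proof scheme of \cite[Corollary~1.4]{PvdP2016b}, substituting~$\log s(n)$ for~$\log m(n)$ at each occurrence. Fix~$\varepsilon>0$ and decompose
\begin{equation*}
\mathcal{M} \cap \MM(n) = \bigsqcup_{r=0}^{n} \bigl(\mathcal{M} \cap \MM(n,r)\bigr),
\end{equation*}
and within each level set partition the matroids~$M$ into those with $U(M) \in \mathcal{U}_n$ and those with $U(M) \notin \mathcal{U}_n$, where~$\mathcal{U}_n$ is the sequence of sets furnished by (the $s(n)$-variant of) \cite[Theorem~1.3]{PvdP2016b}.

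First I would bound the ``typical'' part. Since every matroid is determined by the pair~$(U(M),W(M))$, for each rank~$r$
\begin{equation*}
\bigl|\{M \in \mathcal{M} \cap \MM(n,r) : U(M) \in \mathcal{U}_n\}\bigr| \le |\mathcal{U}_n| \cdot \bigl|\{W(M) : M \in \mathcal{M} \cap \MM(n,r)\}\bigr|.
\end{equation*}
The $s(n)$-variant of \cite[Theorem~1.3]{PvdP2016b} gives $\log|\mathcal{U}_n| \le \delta \log s(n)$ for~$\delta$ as small as I like; choosing~$\delta<\varepsilon/2$ and combining with the hypothesis yields a bound of the form $s(n)^{1-\varepsilon/2}$ for each~$r$. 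Summing over the~$n+1$ ranks costs only a factor of~$n+1$, and since $\log s(n) \gg \log n$ by~\eqref{eq:matroids-lowerbound}, the total is still~$o(s(n))$.

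Second I would dispose of the ``exceptional'' part, namely the matroids with $U(M)\notin \mathcal{U}_n$. This is exactly what the $s(n)$-variant of \cite[Theorem~1.3]{PvdP2016b} gives us directly: such matroids number $o(s(n))$, independently of~$\mathcal{M}$.

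The main obstacle I anticipate is justifying the assertion that \cite[Theorem~1.3]{PvdP2016b} admits the required $s(n)$-variant. The bounds~\eqref{eq:matroids-lowerbound} and~\eqref{eq:matroids-upperbound} show that $\log s(n)$ and $\log m(n)$ agree only up to a factor of~$2$, so a naive rescaling in the original argument produces only the much weaker estimate~$m(n)^{1-\delta}$ on the exceptional count, and such an estimate is not automatically~$o(s(n))$. One has to revisit the compression/entropy argument in~\cite{PvdP2016b} and verify that every~$\log m(n)$ there can legitimately be tightened to~$\log s(n)$ — in other words, that the construction of~$\mathcal{U}_n$ is in fact controlled by~$s(n)$ rather than by~$m(n)$. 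Once this technical verification is in place, the two-line counting above completes the argument.
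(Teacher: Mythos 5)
Your proposal reconstructs what the paper leaves implicit: Theorem~\ref{thm:thin-sufficient} is stated without a proof, with only the remark that it is a variant of \cite[Corollary~1.4]{PvdP2016b} obtained by replacing every $\log m(n)$ by $\log s(n)$. Your two-part decomposition --- bound the matroids with $U(M)\in\mathcal{U}_n$ via $|\mathcal{U}_n|\cdot|\{W(M):M\in\mathcal{M}\cap\MM(n,r)\}|$, sum over the $n+1$ ranks, and then dispose of the matroids with $U(M)\notin\mathcal{U}_n$ by \cite[Theorem~1.3]{PvdP2016b} --- is exactly the structure of the original Corollary~1.4, and your bookkeeping for the typical part (take $\delta<\varepsilon/2$, absorb the factor $n+1$ since $\log s(n)\gg\log n$) is correct.

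You are also right to isolate the $s(n)$-variant of \cite[Theorem~1.3]{PvdP2016b} as the one point that does not follow formally from the statements quoted in this paper: that theorem, as paraphrased here, only guarantees the exceptional matroids are a vanishing fraction of $\MM(n)$, i.e.\ $o(m(n))$, and $o(m(n))$ is not $o(s(n))$ for free given only the factor-of-two gap between~\eqref{eq:matroids-lowerbound} and~\eqref{eq:matroids-upperbound}. The resolution is not a rescaling but a re-reading: the bounds established in \cite{PvdP2016b} are calibrated against the absolute quantity $\frac{1}{n}\binom{n}{\lfloor n/2\rfloor}$ rather than against $\log m(n)$, and since that quantity is at most $\log s(n)$ by~\eqref{eq:matroids-lowerbound}, both the estimate $\log|\mathcal{U}_n|=o(\log s(n))$ and the exceptional count $o(s(n))$ come out directly once the proof is read in those terms. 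So the concern you flag is genuine as a proof obligation against the source, but it is a verification, not a new argument; with it in hand your outline closes the proof, and it is the same route the authors intend.
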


The following is a detailed version of~\cite[Lemma~5.2]{PvdP2016b}.

\begin{theorem}\label{thm:ind-small}
	Let~$f(\kappa) = \kappa \log(2\e/\kappa)$. For all~$\kappa > 0$, as~$n \to \infty$,
	\begin{equation*}
		\max_{0 \le r \le n} \log \indcardleq{J(n,r)}{\kappa \log s(n)} \le (f(\kappa) + o(1)) \log s(n).
	\end{equation*}
\end{theorem}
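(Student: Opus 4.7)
The goal is $\log\indcardleq{J(n,r)}{k}\le(f(\kappa)+o(1))\log s(n)$ with $k\defeq\lfloor\kappa\log s(n)\rfloor$, uniformly in $r$. Rewriting, this says $\indcardleq{J(n,r)}{k}\le(2e/\kappa)^{k(1+o(1))}$, i.e.\ independent sets of size $k$ in $J(n,r)$ behave as though chosen from an effective universe of only $(2+o(1))\log s(n)$ vertices, even though $|V(J(n,r))|=\binom{n}{r}$ can be as large as $\binom{n}{\lfloor n/2\rfloor}\le n\log s(n)$ by the Graham--Sloane bound \eqref{eq:matroids-lowerbound}. Accordingly the trivial estimate $\binom{\binom{n}{r}}{k}\le(en\log s(n)/k)^k$ only yields $\kappa\log(en/\kappa)\log s(n)$, off by a factor of $\log n$; saving this $\log n$ is the entire content of the theorem.

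The structural ingredient I would use is the clique cover of $J(n,r)$ by the upward-star cliques $C_T=\{T\cup\{e\}:e\in[n]\setminus T\}$ indexed by $T\in\binom{[n]}{r-1}$: each $C_T$ is a clique of size $n-r+1$, each vertex lies in exactly $r$ of them, and any independent set meets each $C_T$ at most once. Equivalently, the shadow $\partial I=\bigcup_{v\in I}\binom{v}{r-1}$ has cardinality exactly $r|I|$, and $I$ is recoverable from $\partial I$ together with a selector $e:\partial I\to[n]$ satisfying $e(T)\notin T$ and $T\cup\{e(T)\}\in I$. This is the only structural handle; the rest is a counting estimate that must pass from the ``large'' universe $\binom{n}{r}$ to the ``effective'' universe of size $\approx 2\log s(n)$.

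The main obstacle is that a naive count of the encodings $(\partial I,e)$ gives the useless $\binom{\binom{n}{r-1}}{rk}\cdot n^{rk}$, and avoiding this loss requires combining two subtler facts: (i) $\partial I$ is not an arbitrary $rk$-subset but is the $(r-1)$-shadow of an $r$-uniform hypergraph with $k$ edges, which brings in a Kruskal--Katona-type restriction, and (ii) the encoding overcounts each $v\in I$ by a factor of $r$, so the effective universe per element is $\binom{n}{r-1}/r=\binom{n}{r}/(n-r+1)$, which at $r\approx n/2$ equals $(2+o(1))\binom{n}{\lfloor n/2\rfloor}/n\le(2+o(1))\log s(n)$ by \eqref{eq:matroids-lowerbound}. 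Once the encoding is compressed to a subset of a universe of this size, applying $\binom{N}{k}\le(eN/k)^k$ with $N=(2+o(1))\log s(n)$ and $k=\kappa\log s(n)$ delivers $(f(\kappa)+o(1))\log s(n)$. The constant $2$ appearing in $\log(2e/\kappa)$ thus reflects the factor of $2$ gap between the Graham--Sloane lower bound \eqref{eq:matroids-lowerbound} and the matching upper bound \eqref{eq:matroids-upperbound}; tightening the argument to track this constant is where the detail of the ``detailed version'' in the statement goes.
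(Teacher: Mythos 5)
The paper gives no proof of this theorem -- it is quoted as a ``detailed version'' of \cite[Lemma~5.2]{PvdP2016b} -- so the only thing to assess is whether your sketch can be made to work, and it cannot in the form given. The central claim, that the clique/shadow encoding compresses the effective universe to $\binom{n}{r-1}/r\approx 2\log s(n)$, is wrong: that encoding reproduces the trivial bound exactly. Concretely, since the per-vertex shadows $\binom{X}{r-1}$ ($X\in I$) of a stable set are pairwise disjoint, choosing one $T_X\in\binom{X}{r-1}$ per $X$ gives $r^{|I|}$ encodings of $I$, each determined by the set $\{T_X\}\subseteq\binom{[n]}{r-1}$ together with the selector $e(T_X)=X\setminus T_X$, which takes one of at most $n-r+1$ values. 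Hence
\begin{equation*}
	\indcard{J(n,r)}{k}\le\frac{1}{r^k}\binom{\binom{n}{r-1}}{k}(n-r+1)^k\le\left(\frac{\e\binom{n}{r-1}(n-r+1)}{kr}\right)^k=\left(\frac{\e\binom{n}{r}}{k}\right)^k,
\end{equation*}
using $\binom{n}{r-1}(n-r+1)=r\binom{n}{r}$. The overcounting factor $r^k$ is cancelled exactly by the $(n-r+1)^k$ selector term, so the effective per-element universe is $\binom{n}{r}$, not $\binom{n}{r}/(n-r+1)$; the factor of roughly $n/2$ that the theorem requires you to save is still entirely missing.

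Neither of your two ``subtler facts'' repairs this. Kruskal--Katona provides a \emph{lower} bound on shadow sizes, and here there is no slack anyway: $|\partial I|=r|I|$ exactly because the shadows are disjoint, so there is no shadow compression to exploit. And switching to the full-shadow encoding $(\partial I,e)$ makes matters strictly worse, since $\log\binom{\binom{n}{r-1}}{rk}$ is already of order $rk\log(2\e/\kappa)$, off by a factor of $r$. A correct proof must locate the factor-of-$n$ gain elsewhere; the natural mechanism is a Kleitman--Winston/container-type argument that exploits the degree $r(n-r)$ of $J(n,r)$ being quadratic in $n$ while $\binom{n}{r}\le n\log s(n)$: a small ``fingerprint'' of $q=o(\log s(n))$ well-chosen vertices of $I$ already excludes all but a tiny residual piece of the vertex set, so the expensive term $\log\binom{\binom{n}{r}}{q}$ is $o(\log s(n))$ and the remaining $\le k$ vertices are drawn from a residual universe of size about $2\log s(n)$, which is where the constant $2$ enters. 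Your clique-cover picture is a reasonable first instinct, but it produces no compression at all.
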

\cite[Lemma~5.2]{PvdP2016b} can be recovered by setting~$\kappa = 1/5$, and noting that~$f(1/5) < 1$. The function~$f(\kappa)$ has the property that~$\lim_{\kappa \downarrow 0} f(\kappa) = 0$. We note, in particular, that~$f(1/13) < 0.48$.

The following lemmata show that asymptotically almost all (sparse paving) matroids have rank close to half the number of elements of the ground set.

\begin{lemma}[{\cite[Theorem~16]{PvdP2015}}]\label{lemma:rank-general}
	Let~$\beta > \sqrt{\frac{\ln 2}{2}}$. Asymptotically almost all matroids have rank in the interval~$[n/2 - \beta\sqrt{n}, n/2 + \beta\sqrt{n}]$.
\end{lemma}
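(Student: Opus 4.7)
The plan is to show that $m(n,r)/m(n) \to 0$ sufficiently fast for each $r$ outside the interval $[n/2 - \beta\sqrt{n},\,n/2 + \beta\sqrt{n}]$ that the bound survives a union bound over the at most $n+1$ bad ranks. The argument hinges on coupling a rank-stratified version of the matroid upper bound~\eqref{eq:matroids-upperbound} with the lower bound~\eqref{eq:matroids-lowerbound} and a Gaussian-type estimate on binomial coefficients.

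First, I would aim to establish a rank-localized analogue of~\eqref{eq:matroids-upperbound}, namely
\begin{equation*}
	\log m(n,r) \le \frac{2+o(1)}{n} \binom{n}{r}
\end{equation*}
for every $r$. This is the key technical input; the upper bound in~\eqref{eq:matroids-upperbound} is proved via an entropy-compression argument on antichains in the Boolean lattice, and when the argument is applied only to $r$-subsets one naturally recovers a bound whose governing binomial is $\binom{n}{r}$ rather than $\binom{n}{\lfloor n/2\rfloor}$. I expect this step to be the main obstacle: it is essentially a parameter-tracking exercise through the proof of~\eqref{eq:matroids-upperbound}, but it is where the real work lies.

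Next, for $r$ with $|r - n/2| \ge \beta\sqrt{n}$, the standard Gaussian estimate for binomial coefficients gives
\begin{equation*}
	\binom{n}{r} \le (1+o(1)) \binom{n}{\lfloor n/2\rfloor}\exp\!\left(-\tfrac{2(r-n/2)^2}{n}\right) \le (1+o(1))\binom{n}{\lfloor n/2\rfloor}\exp(-2\beta^2).
\end{equation*}
Substituting this into the rank-localized bound and dividing by $\log m(n) \ge \log s(n) \ge n^{-1}\binom{n}{\lfloor n/2\rfloor}$ from~\eqref{eq:matroids-lowerbound} yields
\begin{equation*}
	\log\frac{m(n,r)}{m(n)} \le \frac{2\exp(-2\beta^2) - 1 + o(1)}{n}\binom{n}{\lfloor n/2\rfloor}.
\end{equation*}

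Finally, the hypothesis $\beta > \sqrt{(\ln 2)/2}$ is exactly the condition $2\exp(-2\beta^2) < 1$, so the coefficient on the right is negative and bounded away from zero. Hence $m(n,r)/m(n)$ decays like $\exp\!\bigl(-\Omega\bigl(\binom{n}{\lfloor n/2\rfloor}/n\bigr)\bigr)$, and summing over the at most $n+1$ values of $r$ with $|r-n/2|\ge\beta\sqrt{n}$ still gives $o(1)$, which is the desired conclusion.
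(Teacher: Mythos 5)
The paper does not prove this lemma itself---it is cited as~\cite[Theorem~16]{PvdP2015}---so there is no internal proof to compare against. Judging your argument on its own merits: the overall strategy (bound $\log m(n,r)$ by something proportional to $\binom{n}{r}$, compare against $\log m(n) \ge \log s(n) \ge \tfrac{1}{n}\binom{n}{\lfloor n/2\rfloor}$, use the Gaussian tail of the binomial to make the ratio decay, and union-bound over $O(n)$ bad ranks) is the natural one and is consistent with the threshold $\beta > \sqrt{(\ln 2)/2}$, which is exactly where $2\e^{-2\beta^2} < 1$.

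However, the argument has a genuine gap that you yourself flag: the rank-localized bound $\log m(n,r) \le \tfrac{2+o(1)}{n}\binom{n}{r}$ is asserted, not proved, and it is not a formal consequence of~\eqref{eq:matroids-upperbound}. Worse, as stated it is actually \emph{false} for extreme ranks. For $r=1$ one has $m(n,1) = 2^n - 1$ (choose the set of loops; the rest form one parallel class), so $\log m(n,1) \approx n$, while $\tfrac{2+o(1)}{n}\binom{n}{1} = 2 + o(1)$. The same happens by duality for $r = n-1$. So any correct version of your rank-localized bound must be restricted to a central range of ranks (say $c n \le r \le (1-c) n$), with a separate --- fortunately trivial --- argument disposing of the extreme ranks, where $m(n,r)$ is at most singly exponential while $m(n)$ is doubly exponential. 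Until the rank-localized upper bound is actually established in the relevant range, with the constant $2$ tracked carefully (since the threshold on $\beta$ depends on that constant), the proposal is a plausible sketch rather than a proof.

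A secondary point: the Gaussian estimate of Lemma~\ref{lemma:central-binomial-coefficient-deviation} is stated for $k = o(n^{2/3})$, so your appeal to it should be restricted to $\beta\sqrt{n} \le |r - n/2| = o(n^{2/3})$; for larger $|r-n/2|$ you need a cruder (but even stronger) tail bound, which again forces you to split into ranges. None of this is fatal, but it reinforces that the single clean inequality you write down does not hold uniformly in $r$, and the proof needs to be organized around a case split.
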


\begin{lemma}\label{lemma:rank}
	Let~$\beta > \sqrt{\frac{\ln 2}{2}}$. Asymptotically almost all sparse paving matroids have rank in the interval~$[n/2-\beta\sqrt{n}, n/2+\beta\sqrt{n}]$.
\end{lemma}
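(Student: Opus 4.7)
The plan is to mirror the proof of Lemma~\ref{lemma:rank-general} for general matroids, noting that both the lower bound~\eqref{eq:matroids-lowerbound} and the upper bound~\eqref{eq:matroids-upperbound} are also available in a sparse-paving form. The two ingredients I would need are the Graham--Sloane lower bound $\log s(n) \ge \binom{n}{\lfloor n/2 \rfloor}/n$ from~\eqref{eq:matroids-lowerbound}, together with the rank-dependent upper bound
\[
\log s(n, r) \le \log m(n, r) \le \frac{2 + o(1)}{n}\binom{n}{r},
\]
which is the rank-uniform version of~\eqref{eq:matroids-upperbound} proved in~\cite{BPvdP2015} (and is the same input used in the proof of Lemma~\ref{lemma:rank-general}).

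Combining these bounds gives, for every $r$,
\[
\frac{\log s(n,r)}{\log s(n)} \le (2 + o(1))\,\frac{\binom{n}{r}}{\binom{n}{\lfloor n/2\rfloor}}.
\]
For $r$ outside $I := [n/2 - \beta\sqrt{n},\, n/2 + \beta\sqrt{n}]$, the unimodality of binomial coefficients together with the Gaussian estimate $\binom{n}{\lfloor n/2 \rfloor + k}/\binom{n}{\lfloor n/2 \rfloor} = (1+o(1))\,\e^{-2k^2/n}$ (valid for $k = o(n^{2/3})$) yields $\binom{n}{r} \le (1 + o(1))\,\e^{-2\beta^2}\binom{n}{\lfloor n/2\rfloor}$, so $\log s(n,r) \le (2\e^{-2\beta^2} + o(1))\log s(n)$. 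Since $\beta > \sqrt{(\ln 2)/2}$, we have $2\e^{-2\beta^2} < 1$, so there exists $\varepsilon > 0$ with $\log s(n,r) \le (1 - \varepsilon)\log s(n)$ for all $r \notin I$ and all sufficiently large $n$.

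Summing over the at most $n+1$ bad values of $r$ then gives
\[
\sum_{r \notin I} s(n,r) \le (n+1)\, s(n)^{1 - \varepsilon} = o(s(n)),
\]
and dividing by $s(n)$ establishes the lemma. The crux is the rank-dependent form of the upper bound on $\log s(n,r)$, which is exactly the input required for Lemma~\ref{lemma:rank-general}; in the sparse-paving setting it comes for free from $s(n,r) \le m(n,r)$, so no new enumerative work is needed.
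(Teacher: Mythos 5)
The paper itself provides no proof of Lemma~\ref{lemma:rank} (it only states it, in parallel with the cited Lemma~\ref{lemma:rank-general}), so your proposal has to be judged on its own merits rather than against a reference argument. Your overall strategy is exactly the natural one: compare a rank-dependent upper bound on $\log s(n,r)$ with the Graham--Sloane lower bound on $\log s(n)$, and use the Gaussian tail of the binomial coefficients to show that the bad ranks contribute $o(s(n))$ in total. The algebra around the choice of $\beta$ is also right: $\beta > \sqrt{(\ln 2)/2}$ is exactly what makes $2\e^{-2\beta^2} < 1$, which is why the constant $2$ in~\eqref{eq:matroids-upperbound} is what the lemma statement is calibrated against.

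The gap is in the claimed uniformity of the rank-dependent upper bound. You invoke $\log s(n,r) \le \log m(n,r) \le \frac{2+o(1)}{n}\binom{n}{r}$ \emph{for every} $r$, with a multiplicative $o(1)$ term that is uniform in $r$. Taken literally, this is false at the extremes: for $r=1$ one has $s(n,1) = n+1$ (stable sets in $J(n,1) = K_n$), so $\log s(n,1) \asymp \log n$, whereas $\frac{2+o(1)}{n}\binom{n}{1} = 2 + o(1)$. The underlying theorem from~\cite{BPvdP2015} is proved rank-by-rank, but any version that is uniform in $r$ must carry an additive polynomial error term which dominates $\frac{1}{n}\binom{n}{r}$ when $r$ is near $0$ or $n$; your write-up silently drops it. The conclusion $\log s(n,r) \le (1-\varepsilon)\log s(n)$ is nonetheless true for all $r \notin I$, but your derivation only establishes it for $r$ in a window around $n/2$. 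The fix is routine: for, say, $|r - n/2| > n/6$ use the crude bound $\log s(n,r) \le \log \ind{J(n,r)} \le \binom{n}{r} \le \binom{n}{\lfloor n/6\rfloor}$, which is already exponentially smaller than $\frac{1}{n}\binom{n}{\lfloor n/2\rfloor} \le \log s(n)$; and reserve your argument (rank-dependent upper bound plus Lemma~\ref{lemma:central-binomial-coefficient-deviation}) for $r$ with $\beta\sqrt{n} < |r - n/2| \le n/6$, where the $o(1)$ in the upper bound is indeed uniform. With this two-case split, the rest of your argument — summing over the at most $n+1$ bad ranks — goes through.
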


\subsection{Stable sets in graphs}

Let~$G$ be a graph with vertex set~$V$. A set~$I \subseteq V$ is called a {\em stable set} if no two distinct vertices in~$I$ are adjacent in~$G$. We write~$\II{G}$ for the set of all stable sets in~$G$, and~$\ind{G}\defeq|\II{G}|$ for its cardinality. Similarly, we write~$\indcard{G}{k}$ (resp.\ $\indcard{G}{\le k}$) for the stable sets of cardinality~$k$ (resp.\ cardinality at most~$k$).

If~$\pi\colon V\to V$ is a bijection mapping vertices to vertices and~$I$ is a stable set, then~$I$ is called~$\pi$-invariant if~$\pi(I) = I$. We write~$\indinv{G}{\pi}$ for the number of $\pi$-invariant stable sets in~$G$.

\subsection{Group theory}

We write~$S_n$ for the symmetric group on~$[n]$, and~$\id$ for the identity element in this group. We will use the Greek letter~$\pi$ to refer to elements in~$S_n$.

Given~$\pi \in S_n$, its {\em order}, written~$\Ord(\pi)$, is defined as the smallest positive integer~$k$ such that~$\pi^k = \id$, and we write $\Supp(\pi) \defeq \{e \in [n] : \pi(e) \neq e\}$ for its {\em support}.

A permutation~$\pi$ is a {\em cycle} if there exists a subset~$\{e_1, e_2, \ldots, e_k\} \subseteq [n]$ such that~$\pi(e_i) = e_{i+1}$ for~$i=1,2,\ldots,k-1$, and~$\pi(e_k) = e_1$, while~$\pi$ fixes every other element. If this is the case, we write~$\pi = (e_1, e_2, \ldots, e_k)$. It is clear that~$\Supp(\pi) = \{e_1, e_2, \ldots, e_k\}$ in this case.

Every permutation~$\pi$ admits a representation as a product of disjoint cycles, i.e.\ $\pi = \gamma_1\gamma_2\ldots\gamma_M$, with~$\gamma_i$ a cycle for each~$i$, and $\Supp(\gamma_i)\cap\Supp(\gamma_j) = \emptyset$ whenever~$i\neq j$. This is called the {\em disjoint cycle notation}. We will always suppress cycles of length~$1$ in the disjoint cycle notation, which implies that the disjoint cycle notation is unique up to reordering the factors.

The group~$S_n$ acts pointwise on~$\binom{[n]}{r}$; for~$X \in \binom{[n]}{r}$, we write~$\pi(X)\defeq \{\pi(x) : x \in X\}$. If~$\pi \in S_n$ and~$X \in \binom{[n]}{r}$, we write~$\Orb_\pi(X)\defeq\{X, \pi(X), \pi^2(X), \ldots\}$, when the permutation is clear from the context we will suppress the subscript~$\pi$. Clearly, the cardinality of~$\Orb(X)$ is at most the order of~$\pi$, and in fact~$|\Orb_\pi(X)|$ always divides~$\Ord(\pi)$.

Note that~$\Orb_\pi(X)$ is a singleton if and only if~$\pi(X) = X$.

A subset~$\mathcal{X} \subseteq \binom{[n]}{r}$ is called {\em $\pi$-invariant} if~$\pi(X) \in \mathcal{X}$ for all~$X \in \mathcal{X}$. This is the case precisely when~$\mathcal{X}$ is the union of $\pi$-orbits.

\subsection{The Johnson graph}

If~$E$ is a finite set, and~$0 \le r \le |E|$, then we write
\begin{equation*}
	\binom{E}{r} \defeq \{ X \subseteq E : |X|=r\}.
\end{equation*}

The {\em Johnson graph}~$J(E,r)$ is the graph on vertex set~$\binom{E}{r}$, in which two vertices~$X, Y \in \binom{E}{r}$ are adjacent if and only if~$|X\symm Y| = 2$. We abbreviate~$J(n,r) \defeq J([n],r)$.

The Johnson graphs are relevant in this paper because of the following observation.

\begin{lemma}
	If~$M \in \MM(n,r)$, then~$W(M) \in \II{J(n,r)}$.
\end{lemma}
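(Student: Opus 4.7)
The plan is to unpack the definitions and derive a contradiction from the assumption that two distinct circuit-hyperplanes of $M$ are adjacent in the Johnson graph.

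First, I would recall what the two conditions say. If $M$ has rank $r$, then every element of $W(M)$ is an $r$-subset of $[n]$ (so it is indeed a vertex of $J(n,r)$), and furthermore it is both a circuit (so every proper subset is independent, forcing its rank to be $r-1$) and a hyperplane (so it is a flat, i.e.\ closed, of rank $r-1$). To verify that $W(M)$ is a stable set of $J(n,r)$, I need to show that no two distinct elements $X, Y \in W(M)$ satisfy $|X \symm Y| = 2$.

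For the contradiction step, I would suppose $X, Y \in W(M)$ with $|X \symm Y| = 2$ and write $A \defeq X \cap Y$, so that $|A| = r-1$ and $X = A \cup \{x\}$, $Y = A \cup \{y\}$ for distinct $x, y \notin A$. From the fact that $X$ is a circuit, the proper subset $A$ is independent of rank $r-1$. Since $X$ is a hyperplane, it is closed and has rank $r-1$, so $\text{cl}(A) = \text{cl}(X) = X$. On the other hand, $Y = A \cup \{y\}$ is a circuit and hence dependent, while $A$ is independent; this forces $y \in \text{cl}(A) = X$, contradicting $y \notin X$.

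There is no real obstacle here: the statement is essentially a reformulation of the basic matroid fact that a circuit-hyperplane exchange (swapping a single element) cannot yield another circuit-hyperplane unless additional structure is assumed. The only care needed is to apply the hyperplane condition (closedness) rather than just the rank condition, since it is closedness that prevents $y$ from entering $\text{cl}(A)$ without lying in $X$ itself.
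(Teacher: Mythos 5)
Your proof is correct. The paper states this lemma without proof, treating it as a standard observation; your argument is the natural one and fills in exactly the omitted details. The key steps all check out: $A = X \cap Y$ is independent of rank $r-1$ because it is a proper subset of the circuit $X$; $\text{cl}(A) = X$ because $X$ is a flat of rank $r-1$ containing $A$ (a flat cannot properly contain another flat of the same rank); and $y \in \text{cl}(A)$ because $A$ is independent while $A \cup \{y\} = Y$ is dependent. The contradiction $y \in X$ follows, establishing that $|X \symm Y| \ge 4$ for any two distinct circuit-hyperplanes, which is exactly stability in $J(n,r)$.
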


Every permutation~$\pi \in S_n$ gives rise to an automorphism of the Johnson graph~$J(n,r)$. (In fact, these permutations form the complete automorphism group, except when~$n = 2r$, in which case there is one extra automorphism, namely the function that maps any vertex to its complement in~$[n]$ (see e.g.\ \cite[Theorem~9.2.1]{BrouwerCohenNeumaier1989}).)

\subsection{Binomial coefficients}

The binomial coefficient~$\binom{n}{\lfloor n/2\rfloor}$ is known as the {\em central binomial coefficient}. Based on Stirling's approximation to the factorial function, asymptotically tight bounds for the central binomial theorem can be computed; we will use
\begin{equation}\label{eq:central-binomial-coefficient}
	\sqrt{2/\pi} \frac{2^n}{\sqrt{n}} \left(1-\frac{1}{n}\right) \le \binom{n}{\lfloor n/2\rfloor} \le \sqrt{2/\pi} \frac{2^n}{\sqrt{n}}.
\end{equation}
Using~\eqref{eq:central-binomial-coefficient}, different central binomial coefficients can be compared; we will require the following inequality:
\begin{equation}\label{eq:central-binomial-coefficient-compare}
	\binom{n-m}{\left\lfloor\frac{n-m}{2}\right\rfloor} \le \frac{n}{n-1} \sqrt{\frac{n}{n-m}} 2^{-m} \binom{n}{\lfloor n/2\rfloor}.
\end{equation}

While~\eqref{eq:central-binomial-coefficient} gives precise asymptotics for the central binomial coefficient, we will also require bounds for binomial coefficients that are close to the central binomial coefficient.

\begin{lemma}[{\cite[Equation~(5.41)]{SpencerFlorescu2014}}]\label{lemma:central-binomial-coefficient-deviation}
	If~$k = o\left(n^{2/3}\right)$, then~$\binom{n}{\lfloor n/2\rfloor + k} = (1+o(1)) \sqrt{\frac{2}{\pi}} \e^{-2k^2/n} \frac{2^n}{\sqrt{n}}$.
\end{lemma}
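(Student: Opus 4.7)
The plan is to apply Stirling's formula directly to the three factorials appearing in $\binom{n}{\lfloor n/2\rfloor + k}$ and then carefully Taylor-expand the resulting logarithms. For notational simplicity I would treat the even case $n = 2m$ first; the case $n = 2m+1$ requires only minor adjustments, since all relevant ratios change by factors of $1+O(1/n)$ that are absorbed into the $(1+o(1))$ term.

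Applying Stirling's formula $N! = (1+o(1))\sqrt{2\pi N}(N/\e)^N$ to each factorial in $\binom{2m}{m+k} = (2m)!/((m+k)!(m-k)!)$ and simplifying, one obtains
\begin{equation*}
\binom{2m}{m+k} = (1+o(1)) \sqrt{\frac{m}{\pi(m^2-k^2)}} \cdot 2^{2m} \cdot \left(1+\frac{k}{m}\right)^{-(m+k)}\left(1-\frac{k}{m}\right)^{-(m-k)}.
\end{equation*}
Under the hypothesis $k = o(n^{2/3})$, we have $k/m = o(n^{-1/3})$, so $k^2/m^2 = o(1)$ and the polynomial prefactor simplifies to $(1+o(1))\sqrt{1/(\pi m)} = (1+o(1))\sqrt{2/(\pi n)}$. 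What remains is to estimate the exponential factor $L := -(m+k)\log(1+k/m) - (m-k)\log(1-k/m)$.

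Expanding $\log(1\pm x) = \pm x - x^2/2 \pm x^3/3 - \cdots$ and multiplying out, the linear and cubic contributions cancel between the two summands, leaving
\begin{equation*}
L = -\frac{k^2}{m} - \frac{2k^4}{3m^3} - O\!\left(\frac{k^6}{m^5}\right).
\end{equation*}
The hypothesis ensures $k^4/m^3 = O(n^{-1/3}) = o(1)$, hence $L = -2k^2/n + o(1)$ and $\e^L = (1+o(1))\e^{-2k^2/n}$. Combining this with the polynomial prefactor and the factor $2^{2m} = 2^n$ yields exactly the claimed asymptotic. The main technical point, and the step where care is required, is verifying the cancellation of the cubic Taylor terms: without this cancellation the remaining error would be of order $k^3/m^2$, and the stated hypothesis $k = o(n^{2/3})$ would no longer suffice.
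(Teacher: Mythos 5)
The paper does not prove this lemma at all---it is imported verbatim from Spencer and Florescu---so your Stirling computation is the natural self-contained substitute, and it is correct in substance: the prefactor analysis, the reduction to the exponent $L$, and the conclusion $L=-2k^2/n+o(1)$ all hold, and the odd case is indeed only a $1+O(1/n)$ perturbation (the center shifts by $1/2$, contributing a factor $\e^{O(k/n)}=1+o(1)$). Two small inaccuracies in the write-up, neither of which affects validity: first, the degree-one terms $-(m+k)x$ and $(m-k)x$ do not cancel---they sum to $-2k^2/m$, and it is their combination with the quadratic terms $mx^2=k^2/m$ that produces $-k^2/m$; likewise the degree-three and degree-four terms combine to $-k^4/(6m^3)$ rather than your $-2k^4/(3m^3)$. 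Second, your closing remark is backwards: an uncancelled term of order $k^3/m^2$ would still be $o(1)$ under $k=o(n^{2/3})$ (indeed $k^3/n^2\to 0$ is exactly equivalent to $k=o(n^{2/3})$), so the hypothesis would still suffice; the cancellation is what would let one weaken the hypothesis to $k=o(n^{3/4})$, which is the true threshold coming from the $k^4/m^3$ term. Since every correction term beyond $-k^2/m$ is $o(1)$ under the stated hypothesis, the lemma follows as you claim.
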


\section{Proofs}

\subsection{Outline of the proofs}

Theorem~\ref{thm:main-matroids} and Theorem~\ref{thm:main-sparsepaving} are proved in this section. As it turns out, the automorphism groups that are generated by a single transposition, which occur in Theorem~\ref{thm:main-sparsepaving}, require a different approach than the other non-trivial groups. This will be reflected in the structure of this section.

Throughout the section, $\pi$ will always be an element of~$S_n$. We write~$\MM(n,r;\pi)$ for the collection of rank-$r$ matroids on ground set~$[n]$ that have~$\pi$ as an  automorphism, i.e.\
\begin{equation*}
	\MM(n,r;\pi) \defeq \left\{ M \in \MM(n,r) : \pi \in \Aut(M)\right\}.
\end{equation*}
Moreover, for any subset~$\Sigma \subseteq S_n$, we define~$\MM(n,r; \Sigma) \defeq \bigcup_{\pi \in \Sigma} \MM(n,r;\pi)$. In addition, we define~$\MM(n; \pi) \defeq \bigcup_{r=0}^n \MM(n,r; \pi)$, and~$\MM(n;\Sigma) \defeq \bigcup_{r=0}^n \MM(n; \pi)$. We use lower case letters to denote cardinalities, e.g.\ $m(n,r;\pi) \defeq | \MM(n,r;\pi)|$, and so on.

Analogously, we write~$\SS(n,r; \pi)$ for the set of those sparse paving matroids in~$\SS(n,r)$ that have~$\pi$ as an automorphism, $s(n,r; \pi)$ for its cardinality, and so on.

In this paper, two sets of permutations play a prominent role. These are
\begin{equation*}
	\Sigma_{\ge 3} \defeq \{ \pi \in S_n : |\Supp(\pi)| \ge 3\}, \qquad\text{and}\qquad \Sigma_2 \defeq \{\pi \in S_n : |\Supp(\pi)| = 2\}.
\end{equation*}
Note that~$\Sigma_2$ is the set of transpositions in~$S_n$.

In this section, we will prove the following two results.
\begin{theorem}\label{thm:matroids}
	$\lim_{n\to\infty} \frac{m(n; \Sigma_{\ge 3})}{s(n)} = 0$.
\end{theorem}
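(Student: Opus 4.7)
My plan is to apply Theorem~\ref{thm:thin-sufficient} to the class $\mathcal{M} = \MM(n;\Sigma_{\ge 3})$. By Lemma~\ref{lemma:rank-general}, it suffices to bound $\log|\{W(M) : M \in \mathcal{M} \cap \MM(n,r)\}|$ by $(1-\varepsilon)\log s(n)$ for ranks $r$ within $O(\sqrt{n})$ of $n/2$. Using that $W(M) \in \IIinv{J(n,r)}{\pi}$ whenever $\pi \in \Aut(M)$, the set of such $W(M)$'s is contained in $\bigcup_{\pi \in \Sigma_{\ge 3}} \IIinv{J(n,r)}{\pi}$, of cardinality at most $\sum_{\pi \in \Sigma_{\ge 3}} \indinv{J(n,r)}{\pi}$. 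Since $|\Sigma_{\ge 3}| \le n! = 2^{O(n\log n)} = s(n)^{o(1)}$, the task reduces to bounding $\indinv{J(n,r)}{\pi}$ uniformly over $\pi \in \Sigma_{\ge 3}$.

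Fix $\pi \in \Sigma_{\ge 3}$ and set $S \defeq \Supp(\pi)$, $k \defeq |S| \ge 3$. The key observation is that if $W \in \IIinv{J(n,r)}{\pi}$ and $X \in W$, then $\pi^j(X) \in W$ for every $j$, and stability forces $|X \symm \pi^j(X)| \neq 2$. Since $\pi$ fixes $[n]\setminus S$ pointwise, this condition depends only on $T \defeq X \cap S$ and confines $T$ to
\[
	A(\pi) \defeq \bigl\{T \subseteq S : |T \symm \pi^j(T)| \neq 2 \text{ for all } j \ge 0\bigr\}.
\]
Writing $X = T \cup U$ with $U \subseteq [n]\setminus S$ of size $r - |T|$, and noting that $\pi$ fixes $U$ pointwise, $\pi$-invariance forces the $U$-part to be the same across each $\pi$-orbit on $A(\pi)$. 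Consequently $W$ is determined by one stable set of $J(n-k, r-|T|)$ per $\pi$-orbit on $A(\pi)$, giving
\[
	\indinv{J(n,r)}{\pi} \;\le\; s(n-k)^{O_\pi},
\]
where $O_\pi$ denotes the number of $\pi$-orbits on $A(\pi)$. Combining this with the estimate $\log s(n-k) \le (2^{1-k}+o(1))\log s(n)$, which follows from \eqref{eq:matroids-lowerbound}, \eqref{eq:matroids-upperbound}, and \eqref{eq:central-binomial-coefficient-compare}, the problem reduces to showing that $O_\pi/2^{k-1}$ is bounded away from $1$ uniformly in $\pi$.

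I would verify this uniform bound through a short case analysis on the cycle type of $\pi$. When $\pi$ has a cycle of length $\ell \ge 3$ on $\{e_1,\ldots,e_\ell\}$, examining three consecutive elements $e_i, e_{i+1}, e_{i+2}$ shows that $T \cap \{e_1,\ldots,e_\ell\} \in \{\emptyset, \{e_1,\ldots,e_\ell\}\}$ for every $T \in A(\pi)$; iterating over cycles, $|A(\pi)| \le 2^m$, where $m$ is the number of non-trivial cycles of $\pi$. When $\pi$ is instead a product of $t \ge 2$ disjoint transpositions (so $k = 2t$), a direct enumeration yields $|A(\pi)| = 2^t(2^t - t)$ and $O_\pi = 2^{t-1}(2^t - t + 1)$, hence $O_\pi/2^{k-1} = 1-(t-1)/2^t$. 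Both expressions stay bounded away from $1$ for each fixed cycle type. The main obstacle is the case $t \to \infty$: the exponent tends to $1$, so the crude product bound $s(n-k)^{O_\pi}$ is by itself not sufficient. The extra saving must come from cross-stability constraints between orbits — if $T, T' \in A(\pi)$ satisfy $|T| = |T'|$ and $|T \symm T'| = 2$, the associated stable sets of $J(n-k, r-|T|)$ must be vertex-disjoint — and making this quantitative enough to cover all $\pi \in \Sigma_{\ge 3}$ is the technical heart of the argument.
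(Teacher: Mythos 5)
Your setup is the same as the paper's: reduce via Theorem~\ref{thm:thin-sufficient}, use that $W(M) \in \IIinv{J(n,r)}{\pi}$ for $\pi \in \Aut(M)$, take a union bound over $\pi \in \Sigma_{\ge 3}$ (affordable since $|\Sigma_{\ge 3}| \le n!$), and reduce to a uniform bound on $\indinv{J(n,r)}{\pi}$. Where you diverge is in how you try to prove this uniform bound, and that is exactly where your argument has a gap — which, to your credit, you explicitly flag.

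Your product bound $\indinv{J(n,r)}{\pi} \le s(n-k)^{O_\pi}$, with $O_\pi$ the number of $\pi$-orbits on $A(\pi)$, is a correct upper bound, but as you compute, when $\pi$ is a product of $t$ disjoint transpositions the resulting exponent satisfies $O_\pi/2^{k-1} = 1 - (t-1)/2^t \to 1$, so no $\varepsilon > 0$ comes out for large $t$. Invoking ``cross-stability constraints between orbits'' to close this is not a small refinement: making that quantitative is the whole problem, and without it the argument does not prove the theorem. (There is also a secondary error: for a $4$-cycle $(e_1 e_2 e_3 e_4)$, the sets $\{e_1,e_3\}$ and $\{e_2,e_4\}$ lie in $A(\pi)$, so the claim that $T$ restricted to a cycle of length $\ge 3$ is always $\emptyset$ or the whole cycle is false, and the bound $|A(\pi)| \le 2^m$ fails; the restriction to a single $\ell$-cycle must avoid exactly the $T$'s that form a single cyclic interval, which leaves more than two possibilities once $\ell \ge 4$.)

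The paper closes exactly this gap with a dichotomy that your approach is missing. Writing $I = I^0 \cup I^+$ where $I^0 = I \cap \Fix(\pi)$ and $I^+$ is the union of the non-singleton $\pi$-orbits in $I$, and letting $\lambda(I)$ count the large orbits, the paper fixes $\Lambda = \frac{1}{13}\log s(n)$ and splits on whether $\lambda(I) > \Lambda$. If $\lambda(I) > \Lambda$ (the ``complex'' case), $I$ can be reconstructed from any stable subset that picks at least one vertex from each large orbit, and each large orbit contributes at least $3$ nonempty choices, so each complex $I$ is witnessed by at least $3^\Lambda$ distinct stable sets of $J(n,r)$; this forces the number of complex $I$ to be at most $\ind{J(n,r)}\,3^{-\Lambda}$, a genuine polynomial saving. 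If $\lambda(I) \le \Lambda$, then $I^+$ is determined by a stable set of cardinality at most $\Lambda$ (one representative per orbit), bounded via Theorem~\ref{thm:ind-small} to give $\le 0.48\log s(n)$, while $I^0$ is bounded via the $2^M \log s(n-m)$ estimate of Lemma~\ref{lemma:indinv-sum} and Lemma~\ref{lemma:indinvS-small} to give $\le 0.51\log s(n)$. The ``complex'' case is precisely what rescues the regime $t\to\infty$ that defeats your crude product bound, because there the number of large orbits present in a typical $\pi$-invariant stable set is itself large, triggering the reconstruction gain. Without some analogue of this reconstruction trick, your route does not go through.
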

\begin{theorem}\label{thm:sparsepaving}
	$\lim_{n \to \infty} \frac{s(n; \Sigma_2)}{s(n)} = 0$.
\end{theorem}

It is easily verified that these two theorems imply the main results, Theorem~\ref{thm:main-matroids} and Theorem~\ref{thm:main-sparsepaving}. As every asymmetric matroid that does not have a transposition as automorphism is in~$\MM(n; \Sigma_{\ge 3})$, and~$m(n) \ge s(n)$, Theorem~\ref{thm:matroids} implies Theorem~\ref{thm:main-matroids}. Similarly, as the number of symmetric sparse paving matroids is at most~$s(n; \Sigma_2) + s(n; \Sigma_{\ge 3})$, and $s(n; \Sigma_{\ge 3}) \le m(n; \Sigma_{\ge 3})$, Theorem~\ref{thm:main-sparsepaving} follows upon combining Theorem~\ref{thm:matroids} and~\ref{thm:sparsepaving}.

\ignore{  
In what follows, $\pi$ will always denote a permutation of~$[n]$, and~$\Sigma$ will always be a subset of permutations. We will write~$\MM(n,r;\pi)$ for the collection of matroids of rank~$r$ on ground set~$[n]$ that have~$\pi$ as an automorphism, i.e.
\begin{equation*}
	\MM(n,r; \pi) \defeq \{M \in \MM(n,r) : \pi \in \Aut(M)\}.
\end{equation*}
Moreover, we write~$\MM(n,r;\Sigma) \defeq \bigcup_{\pi \in \Sigma} \MM(n,r;\pi)$, $\MM(n;\pi) = \bigcup_{r=0}^{n} \MM(n,r;\pi)$, and so on.

If~$\Sigma \subseteq S_n$, we write~$\Sigma' = \{\pi \in \Sigma : \text{$\Ord(\pi)$ is prime}\}$.

In what follows, we will use
\begin{equation}
	\Sigma_{\ge 3} \defeq \{\pi \in S_n : |\Supp(\pi)| \ge 3\}, \qquad\text{and}\qquad \Sigma_2 \defeq \{\pi \in S_n : |\Supp(\pi)| = 2\}.
\end{equation}
Note that~$\Sigma_2$ is just the set of transpositions of~$[n]$.

\begin{lemma}\label{lemma:Sigma-prime}
	For all~$0 \le r \le n$, $\MM(n,r;\Sigma_{\ge 3}) = \MM(n,r; \Sigma_{\ge 3}')$ and~$\MM(n;\Sigma_{\ge 3}) = \MM(n; \Sigma_{\ge 3}')$.
\end{lemma}

\begin{proof}
	The second claim follows from the first claim by taking the union over~$r$.
	
	As~$\Sigma_{\ge 3}' \subseteq \Sigma_{\ge 3}$, clearly~$\MM(n,r; \Sigma_{\ge 3}') \subseteq \MM(n,r; \Sigma_{\ge 3})$. It remains to show that~$\MM(n,r; \Sigma_{\ge 3}') \supseteq \MM(n,r; \Sigma_{\ge 3})$ as well. We will do this by showing that for every matroid~$M$ and~$\pi \in \Sigma_{\ge 3} \cap \Aut(M)$ there is a~$\pi' \in \Sigma_{\ge 3}' \cap \Aut(M)$, which implies the claim.
	
	So let~$\pi \in \Sigma_{\ge 3} \cap \Aut(M)$. If~$\pi$ has prime order, then we can take~$\pi' = \pi$, and we are done. If~$\Ord(\pi)$ is not prime, then~$\Ord(\pi) = pk$ for some prime~$p$ and some~$k > 1$. Let~$\pi' = \pi^k$. Clearly~$\pi' \in \Aut(M)$.
	
	There are two situations. First, if~$\Ord(\pi)$ is not a power of~2, we may assume~$p \ge 3$. It follows that~$|\Supp(\pi')| \ge 3$, and hence~$\pi' \in \Sigma_{\ge 3}'$.
	
	It remains to consider the case that~$\Ord(\pi)$ is a power of 2, in which case we must have~$p = 2$. In disjoint-cycle notation, $\pi$ must contain a cycle~$\gamma$ of length at least~4. It follows that~$\Supp(\pi') \supseteq \Supp(\gamma)$, and hence~$\pi' \in \Sigma_{\ge 3}$.
\end{proof}

\begin{theorem}\label{thm:matroids}
	$\lim_{n\to\infty} \frac{m(n; \Sigma_{\ge 3}')}{s(n)} = 0$.
\end{theorem}

\begin{theorem}\label{thm:sparsepaving}
	$\lim_{n \to \infty} \frac{s(n; \Sigma_2)}{s(n)} = 0$.
\end{theorem}

It is easily verified that these two theorems imply the main results of this paper. As $m(n) \ge s(n)$ and~$m(n; \Sigma_{\ge 3}') = m(n; \Sigma_{\ge 3})$, Theorem~\ref{thm:matroids} implies Theorem~\ref{thm:main-matroids}; and as~$s(n; \Sigma_{\ge 3}) \le m(n; \Sigma_{\ge 3}')$, Theorem~\ref{thm:matroids} and Theorem~\ref{thm:sparsepaving} together imply Theorem~\ref{thm:main-sparsepaving}. We will therefore focus on proving Theorem~\ref{thm:matroids} and Theorem~\ref{thm:sparsepaving}.

} 

\subsection{Permutations that move at least three elements}

A central role in the proof of Theorem~\ref{thm:matroids} will be played by the circuit-hyperplanes of matroids whose automorpism group contains a given permutation~$\pi$.

Let~$M \in \MM(n,r)$, and~$\pi \in S_n$. Observe that if~$\pi \in \Aut(M)$, then~$W(M)$ is a $\pi$-invariant stable set in~$J(n,r)$. We show that if $\pi\in\Sigma_{\ge 3}$, then the number of $\pi$-invariant stable sets in~$J(n,r)$ is small---so small in fact, that even after summing over all~$\pi \in \Sigma_{\ge 3}$, the resulting bound on~$|\{W(M) : M \in \MM(n,r;\Sigma_{\ge 3})\}|$ is sufficiently small for an application of Theorem~\ref{thm:thin-sufficient}, which then implies Theorem~\ref{thm:matroids}.

For a permutation~$\pi \in S_n$, define
\begin{equation*}
	\Fix(\pi) \defeq \left\{X \in \binom{[n]}{r} : \pi(X) = X\right\}
\end{equation*}
for the set of $r$-sets that are fixed under~$\pi$. Recall that~$\IIinv{J(n,r)}{\pi}$ is the collection of all $\pi$-invariant stable sets in~$J(n,r)$; we identity two special subsets of~$\IIinv{J(n,r)}{\pi}$, namely
\begin{equation*}
	\begin{split}
		\IIS{J(n,r)}{\pi} &\defeq \left\{I \in \IIinv{J(n,r)}{\pi} : I \subseteq \Fix(\pi)\right\},\qquad\text{and} \\
		\IIL{J(n,r)}{\pi} &\defeq \left\{I \in \IIinv{J(n,r)}{\pi} : I \cap \Fix(\pi) = \emptyset\right\}.
	\end{split}
\end{equation*}
These sets do not form a bipartition of~$\IIinv{J(n,r)}{\pi}$. Rather, they form a ``basis'' in the sense that each~$I \in \IIinv{J(n,r)}{\pi}$ can be written as the disjoint union~$I = I^0 \cup I^+$, where~$I^0 \defeq I \cap F(\pi) \in \IIS{J(n,r)}{\pi}$, and~$I^+ \defeq I\setminus F(\pi) \in \IIL{J(n,r)}{\pi}$.

We use lower case letters to denote cardinality, so
\begin{equation*}
	\indinvS{J(n,r)}{\pi} \defeq |\IIS{J(n,r)}{\pi}|, \qquad\text{and}\qquad \indinvL{J(n,r)}{\pi} \defeq |\IIL{J(n,r)}{\pi}|.
\end{equation*}

The following lemma bounds~$\indinvS{J(n,r)}{\pi}$ in terms of stable sets in smaller Johnson graphs.

\begin{lemma}\label{lemma:indinv-sum}
	For all~$0 \le r \le n$, if~$\pi \in S_n$ has a decomposition into~$M$ disjoint cycles, $\pi=\gamma_1\gamma_2\ldots\gamma_M$, in which~$\gamma_j$ has length~$\ell_j = |\Supp(\gamma_j)|$, then
	\begin{equation*}
		\log \indinvS{J(n,r)}{\pi}
		\le 2^M \log s(n-m),
	\end{equation*}
	where~$m = |\Supp(\pi)|$.
\end{lemma}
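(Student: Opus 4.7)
The plan is to show that every $\pi$-fixed $r$-set decomposes according to a choice of which cycle-supports of $\pi$ it contains, then to factorise $\indinvS{J(n,r)}{\pi}$ over this decomposition and bound each factor by the number of stable sets in a smaller Johnson graph.

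First I would make the structural observation that $X \in \Fix(\pi)$ if and only if $X$ is a union of whole cycle-supports of $\pi$ together with an arbitrary subset of $[n]\setminus\Supp(\pi)$: indeed, $\pi(X)=X$ forces, for each cycle $\gamma_j$, either $\Supp(\gamma_j)\subseteq X$ or $\Supp(\gamma_j)\cap X = \emptyset$. Hence, indexing by the set $S \subseteq [M]$ of cycles whose support lies in $X$ and writing $m_S \defeq \sum_{j\in S}\ell_j$, we obtain a partition
\begin{equation*}
	\Fix(\pi) = \bigsqcup_{S\subseteq[M]} \Fix_S(\pi),\qquad \Fix_S(\pi) \defeq \left\{\bigcup_{j\in S}\Supp(\gamma_j) \;\cup\; Y : Y\in \binom{[n]\setminus \Supp(\pi)}{r-m_S}\right\}.
\end{equation*}

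Next I would note that the map $X \mapsto Y = X\setminus\Supp(\pi)$ identifies $\Fix_S(\pi)$ with $\binom{[n]\setminus\Supp(\pi)}{r-m_S}$, and that under this identification two sets $X, X'\in \Fix_S(\pi)$ are adjacent in $J(n,r)$ exactly when $|Y\symm Y'|=2$ (the cycle-support parts agree). Thus the subgraph of $J(n,r)$ induced on $\Fix_S(\pi)$ is isomorphic to $J(n-m, r-m_S)$.

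Given any $I \in \IIS{J(n,r)}{\pi}$, the restrictions $I_S \defeq I\cap\Fix_S(\pi)$ yield a disjoint decomposition $I = \bigsqcup_S I_S$. Each $I_S$, as a subset of a stable set in $J(n,r)$, is stable in the induced subgraph, hence a stable set in $J(n-m,r-m_S)$. Since these correspond bijectively with sparse paving matroids, there are at most $s(n-m,r-m_S)\le s(n-m)$ choices for each $I_S$. As $S$ ranges over the $2^M$ subsets of $[M]$, multiplicativity gives
\begin{equation*}
	\indinvS{J(n,r)}{\pi} \;\le\; \prod_{S\subseteq[M]} s(n-m,r-m_S) \;\le\; s(n-m)^{2^M},
\end{equation*}
and taking logarithms yields the claim. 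The only delicate point is verifying that the identification of $\Fix_S(\pi)$ with the vertex set of a smaller Johnson graph respects adjacency; once that is in place, the factorisation over $S$ and the upper bound $s(n-m,\cdot)\le s(n-m)$ are immediate, and no tight control on the interaction between different $\Fix_S$ is needed because we simply upper-bound the count of stable sets by the count of stable sets in the disjoint union.
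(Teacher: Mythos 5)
Your proof is correct and follows essentially the same route as the paper's: partition $\Fix(\pi)$ into blocks $\Fix_S(\pi)$ indexed by subsets $S\subseteq[M]$ of cycles (the paper calls these $P_{\mathcal J}$), observe that each block induces a subgraph isomorphic to $J(n-m,\,r-m_S)$, factor a $\pi$-invariant stable set inside $\Fix(\pi)$ over the blocks, and bound each factor by $s(n-m)$. The only difference is cosmetic: you spell out the adjacency-preserving identification $X\mapsto X\setminus\Supp(\pi)$ a bit more explicitly than the paper does, which is a minor gain in rigour but not a different argument.
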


\begin{proof}
	
	Let~$\pi$ be as in the statement of the lemma. If~$X \in \Fix(\pi)$, then for each~$j \in [M]$ either~$\Supp(\gamma_j) \cap X =\emptyset$, or~$\Supp(\gamma_j) \subseteq X$. Let
	\begin{equation*}
		P_\mathcal{J} \defeq \left\{X \in \binom{[n]}{r} : X \cap \Supp(\pi) = \bigcup_{j \in \mathcal{J}} \Supp(\gamma_j)\right\}.
	\end{equation*}
	The subgraph of~$J(n,r)$ induced by~$P_\mathcal{J}$ is isomorphic to~$J(n-m,r')$, where~$r' = r - \sum_{j \in \mathcal{J}} \ell_j$.
	
	If~$X \in F(\pi)$, then there exists a unique~$\mathcal{J} \subseteq [M]$ such that~$X \in P_\mathcal{J}$. It follows that if~$I \in \IIS{J(n,r)}{\pi}$, then~$\{I \cap P_\mathcal{J} : \mathcal{J} \subseteq [M]\}$. Moreover, each~$I \cap P_\mathcal{J}$ is a stable set in~$J(n,r)[P_\mathcal{J}]$. Thus,
	\begin{equation*}
		\log \indinvS{J(n,r)}{\pi} \le \sum_{\mathcal{J} \subseteq [M]} \log \ind{J\left(n-m, r - \sum_{j \in \mathcal{J}} \ell_j\right)}.
	\end{equation*}
	The lemma now follows since~$\ind{J(n-m, r')} \le s(n-m)$ for all~$r'$.
\end{proof}

\begin{lemma}\label{lemma:indinvS-small}
	$\max\limits_{\substack{\pi \in \Sigma_{\ge 3} \\ 0 \le r \le n}} \log \indinvS{J(n,r)}{\pi} \le \left(1/2 + o(1)\right) \log s(n)$ as $n\to\infty$.
\end{lemma}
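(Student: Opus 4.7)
The plan is to reduce the claim to a numerical estimate via Lemma~\ref{lemma:indinv-sum}, then use the asymptotic bounds for $\log s(n)$ assembled in the preliminaries. Given $\pi\in\Sigma_{\ge 3}$, write $m=|\Supp(\pi)|$ and let $M$ denote the number of cycles of length at least~$2$ in the disjoint-cycle decomposition of $\pi$. Lemma~\ref{lemma:indinv-sum} yields $\log\indinvS{J(n,r)}{\pi}\le 2^M\log s(n-m)$ uniformly in $r$, so it suffices to bound the right-hand side by $(1/2+o(1))\log s(n)$ uniformly over $m\ge 3$ and the admissible values of $M$. Since every cycle of length $\ge 2$ contributes at least~$2$ to $m$, we have $2M\le m$; together with $m\ge 3$ this forces $m-M\ge 2$, with equality exactly in the two extremal patterns $(m,M)=(3,1)$ (a single $3$-cycle) and $(m,M)=(4,2)$ (a product of two disjoint transpositions).

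The workhorse estimate is that, whenever $n-m\to\infty$,
\begin{equation*}
	2^M\log s(n-m) \le (2+o(1)) \cdot 2^{M-m} \cdot \left(\frac{n}{n-m}\right)^{3/2} \log s(n),
\end{equation*}
which I would obtain by chaining the upper bound~\eqref{eq:matroids-upperbound} on $\log m(n-m)\ge\log s(n-m)$, the binomial comparison~\eqref{eq:central-binomial-coefficient-compare}, and the asymptotics~\eqref{eq:central-binomial-coefficient} applied to the lower bound~\eqref{eq:matroids-lowerbound} on $\log s(n)$. In the extremal patterns $(3,1)$ and $(4,2)$, the polynomial factor tends to~$1$ and $2^{M-m}=1/4$, so the coefficient equals exactly the claimed $1/2+o(1)$. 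Every other configuration either keeps $m$ bounded with $m-M\ge 3$, so that $2^{M-m}\le 1/8$; or forces $m\to\infty$, in which case the exponentially decaying factor $2^{M-m}\le 2^{-\lceil m/2\rceil}$ swamps the at-most-polynomial $(n/(n-m))^{3/2}$ as long as $n-m$ is not too small, say $n-m\ge n^{1/3}$.

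The residual regime $n-m\le n^{1/3}$ is handled crudely: then $s(n-m)$ is at most sub-exponential in~$n$, say $2^{O(n^{1/3})}$, while $2^M\le 2^{n/2}$, so $2^M\log s(n-m)\le 2^{n/2+O(n^{1/3})}=o(2^n/n^{3/2})=o(\log s(n))$ by~\eqref{eq:matroids-lowerbound} and~\eqref{eq:central-binomial-coefficient}.

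The main obstacle will be the sharpness of the constant $1/2$: it is attained simultaneously at the two distinct patterns $(3,1)$ and $(4,2)$, so the argument must use the precise $2^{-m}$ in~\eqref{eq:central-binomial-coefficient-compare} and cannot afford any slack in comparing $\log s(n-m)$ with $\log s(n)$. A cruder asymptotic comparison would yield a coefficient strictly greater than~$1/2$, which might still be enough for the downstream application of Theorem~\ref{thm:thin-sufficient} but would lose the clean numerical form of the statement.
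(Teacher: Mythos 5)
Your proof is correct and follows essentially the same route as the paper's: invoke Lemma~\ref{lemma:indinv-sum} to reduce to bounding $2^M \log s(n-m)$, then chain \eqref{eq:matroids-upperbound}, \eqref{eq:central-binomial-coefficient-compare}, and \eqref{eq:matroids-lowerbound} to compare with $\log s(n)$, identifying $m-M=2$ (i.e.\ $2^{M-m}=1/4$) as the worst case and handling the regime where $n-m$ is small by a separate crude bound. The paper splits the range at $m=\lceil 2n/3\rceil$ rather than at $n-m=n^{1/3}$ and plugs $M\le\lfloor m/2\rfloor$ in a bit earlier, but these are cosmetic differences; the extremal analysis and the numerical source of the constant $1/2$ are identical.
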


\begin{proof}
	By Lemma~\ref{lemma:indinv-sum}, $\max_{0 \le r \le n} \log \indinvS{J(n,r)}{\pi} \le 2^M \log s(n-m)$ for all~$\pi \in S_n$, where~$m = |\Supp(\pi)|$ and~$M$ is the number of cycles in the disjoint cycle representation of~$\pi$. As~$M \le \lfloor m/2\rfloor$, it follows that
	\begin{equation}\label{eq:indinvS-small-1}
		\max_{\substack{\pi \in S_{\ge 3} \\ 0 \le r \le n}} \log \indinvS{J(n,r)}{\pi} \le \max_{3 \le m \le n} 2^{\lfloor m/2\rfloor} \log s(n-m).
	\end{equation}
	It remains to bound the right-hand side of~\eqref{eq:indinvS-small-1}.
	
	We have
	\begin{equation}\label{eq:indinvS-small-2}
		\begin{aligned}
			\max_{\left\lceil\frac{2n}{3}\right\rceil \le m \le n} 2^{\lfloor m/2\rfloor} \log s(n-m)
				& \le 2^{\lfloor n/2\rfloor} \log s(\lfloor n/3\rfloor)
					&& \\
				& \le 2^{\lfloor n/2\rfloor} \frac{6+o(1)}{n} \binom{\lfloor n/3\rfloor}{\lfloor n/6\rfloor}
					&& \text{by~\eqref{eq:matroids-upperbound}} \\
				& \le \frac{18+o(1)}{n} \binom{n}{\lfloor n/2\rfloor} 2^{-n/6}
					&& \text{by~\eqref{eq:central-binomial-coefficient-compare}} \\
				&= o(\log s(n))
					&& \text{by~\eqref{eq:matroids-lowerbound}.}
		\end{aligned}
	\end{equation}
	
	Next, suppose that~$3 \le m \le  \left\lfloor\frac{2n}{3}\right\rfloor$. As~$n-m \to\infty$, an application of~\eqref{eq:matroids-upperbound} shows that	
	\begin{equation*}
		2^{\lfloor m/2\rfloor} s(n-m)
			\le 2^{\lfloor m/2\rfloor} \frac{2+o(1)}{n-m} \binom{n-m}{\left\lfloor\frac{n-m}{2}\right\rfloor},
	\end{equation*}
	which, by~\eqref{eq:central-binomial-coefficient-compare}, is at most
	\begin{equation*}
		2^{\lfloor m/2\rfloor} \frac{2+o(1)}{n-m} \sqrt{\frac{n}{n-m}} 2^{-m} \binom{n}{\lfloor n/2\rfloor}
			\le 2^{-\lceil m/2\rceil} (2+o(1)) \left(\frac{n}{n-m}\right)^{3/2} \log s(n),
	\end{equation*}
	so that
	\begin{equation}\label{eq:indinvS-small-3}
		\max_{3 \le m \le \left\lfloor\frac{2n}{3}\right\rfloor} 2^{\lfloor m/2\rfloor} s(n-m) \le \left(1/2 + o(1)\right) \log s(n).
	\end{equation}
	Combining~\eqref{eq:indinvS-small-2} and~\eqref{eq:indinvS-small-3} with~\eqref{eq:indinvS-small-1} proves the lemma.
\end{proof}

\ignore{ 

\begin{lemma}
	For all~$\pi \in S_n$ with~$|\Supp(\pi)| = m$,
	\begin{equation*}
		\log \indinvS{J(n,r)}{\pi} \le \sqrt{n\pi/2} \binom{n}{\lfloor n/2\rfloor} 2^{-\lceil m/2\rceil} \left(1+\frac{1}{7n}\right).
	\end{equation*}
\end{lemma}

\begin{proof}
	From Lemma~\ref{lemma:indinv-sum}, we have
	\begin{equation*}
		\log \indinvS{J(n,r)}{\pi} \le 2^M \binom{n-m}{\left\lfloor\frac{n-m}{2}\right\rfloor}  \le 2^{\lfloor m/2\rfloor} \binom{n-m}{\left\lfloor\frac{n-m}{2}\right\rfloor},
	\end{equation*}
	where we use that~$J(n-m,s)$ has~$\binom{n-m}{s} \le \binom{n-m}{\left\lfloor\frac{n-m}{2}\right\rfloor}$ vertices, and~$M \le \lfloor m/2\rfloor$. If~$m = n$, we obtain
	\begin{equation*}
		\log \indinvS{J(n,r)}{\pi} \le 2^{\lfloor n/2\rfloor} \le \sqrt{n \pi/2} \binom{n}{\lfloor n/2\rfloor} 2^{-\lceil n/2\rceil} \left(1+ \frac{1}{7n}\right).
	\end{equation*}
	If~$m<n$, we bound the binomial coefficient by
	\begin{equation*}
		\binom{n-m}{\left\lfloor\frac{n-m}{2}\right\rfloor} \le \sqrt{\frac{n}{n-m}} 2^{-m} \binom{n}{\lfloor n/2\rfloor} \left(1 + \frac{1}{7n}\right),
	\end{equation*}
	which, combined with~TODO, proves the lemma.
\end{proof}

\begin{lemma}
	For sufficiently large~$n$, if~$\pi \in S_n$ with~$|\Supp(\pi)| = m$, $2 \le m \le 4\log n$, then
	\begin{equation*}
		\log \indinvS{J(n,r)}{\pi} \le \frac{2+o(1)}{n} \binom{n}{\lfloor n/2\rfloor} 2^{-\lceil m/2\rceil}.
	\end{equation*}
\end{lemma}

\begin{proof}
	Our starting point is Lemma~TODO. As~$m = o(n)$, it follows from TODO that
	\begin{equation*}
		\max_{0 \le r \le n-m} \log \ind{J(n-m,s)} \le \log s(n-m) \le \frac{2+o(1)}{n-m} \binom{n-m}{\left\lfloor\frac{n-m}{2}\right\rfloor} \le \frac{2+o(1)}{n}\binom{n}{\lfloor n/2\rfloor} 2^{-m},
	\end{equation*}
	and hence, by Lemma~TODO,
	\begin{equation*}
		\max_{0 \le r \le n} \le \log \indinvS{J(n,r)}{\pi} \le \frac{2+o(1)}{n} \binom{n}{\lfloor n/2\rfloor} 2^{M-m}.
	\end{equation*}
	The lemma now follows from~$M \le \lfloor m/2\rfloor$.
\end{proof}
} 

Observe that if~$I$ is a $\pi$-invariant stable set, and~$I' \subseteq I$ contains at least one vertex from each $\pi$-orbit that is contained in~$I$, then~$I$ can be reconstructed by closing~$I'$ under $\pi$-images. This observation will be used in the proof of the following lemma.

\begin{lemma}\label{lemma:indinv-small}
	There exists~$\varepsilon > 0$ such that for sufficiently large~$n$ and all~$0 \le r \le n$, if~$\pi \in \Sigma_{\ge 3}$, then~$\log \indinv{J(n,r)}{\pi} \le (1-\varepsilon) \log s(n)$.
\end{lemma}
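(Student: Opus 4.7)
The plan is to split every $\pi$-invariant stable set $I$ as $I = I^0 \sqcup I^+$ with $I^0 = I\cap \Fix(\pi) \in \IIS{J(n,r)}{\pi}$ and $I^+ = I \setminus \Fix(\pi)\in\IIL{J(n,r)}{\pi}$. Since the map $I \mapsto (I^0, I^+)$ is injective, we obtain $\log\indinv{J(n,r)}{\pi} \le \log\indinvS{J(n,r)}{\pi} + \log\indinvL{J(n,r)}{\pi}$. Lemma~\ref{lemma:indinvS-small} already bounds the first summand by $(1/2+o(1))\log s(n)$, so it suffices to exhibit a fixed $\delta > 0$ with $\log\indinvL{J(n,r)}{\pi} \le (1/2 - \delta)\log s(n)$ uniformly in $r$ and $\pi \in \Sigma_{\ge 3}$.

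To bound $\indinvL$ we invoke the orbit-representative observation preceding the lemma. Each $\pi$-orbit meeting $I^+$ is contained in $\binom{[n]}{r} \setminus \Fix(\pi)$ and therefore has size at least two, so choosing the lex-smallest representative from every such orbit yields a stable set $I' \subseteq I^+$ with $|I'| \le |I^+|/2$, and the map $I^+ \mapsto I'$ is injective because $I^+$ is recovered as the $\pi$-closure of $I'$. Thus $\indinvL$ is bounded by the number of stable sets $I'$ of $J(n,r)$ with $|I'|\le|I^+|/2$. A Singleton-type inequality for binary constant-weight codes of minimum distance at least $4$ gives $|I^+| \le \binom{n}{r-1}/r$, and combining with~\eqref{eq:matroids-lowerbound} and~\eqref{eq:central-binomial-coefficient} yields $|I'| \le (1+o(1))\log s(n)$. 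Applied naively, Theorem~\ref{thm:ind-small} then only gives $\log\indinvL\le(f(1)+o(1))\log s(n)$, and since $f(1)>1/2$ this does not close the gap.

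To overcome this we threshold $|I^+|$ at $T = (2/13)\log s(n)$, choosing $1/13$ because $f(1/13) < 0.48$. For $|I^+|\le T$ orbit compression gives $|I'|\le(1/13)\log s(n)$, and Theorem~\ref{thm:ind-small} contributes at most $s(n)^{0.48 + o(1)}$. For $|I^+|>T$ we decompose $I^+$ along the pattern it induces on $\Supp(\pi)$: each $\pi$-orbit on $\binom{[n]}{r}\setminus \Fix(\pi)$ is parametrised by a non-fixed $\pi$-orbit $P$ of subsets of $\Supp(\pi)$ together with an extension $Y \in \binom{[n]\setminus \Supp(\pi)}{r - |S|}$, and the extensions appearing under a given good pattern form a stable set in a copy of $J(n-m,r-|S|)$. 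Using~\eqref{eq:matroids-upperbound} together with~\eqref{eq:central-binomial-coefficient-compare}, exactly as in the proof of Lemma~\ref{lemma:indinvS-small}, gives $\log s(n-m)\le 2^{1-m+o(1)}\log s(n)$, so the large regime contributes at most $(1/2 - \delta)\log s(n)$ for some absolute $\delta > 0$.

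The hard part will be the large-$|I^+|$ regime when $|\Supp(\pi)|$ is small, most delicately $m = 4$ (either $\pi$ a $4$-cycle or a product of two disjoint transpositions), where the pattern decomposition has only one or two good patterns and the $2^{-m}$ saving is slight; there one additionally uses the cross-pattern adjacency constraints, which force pairs of extension sets $\mathcal{Y}_{P_1},\mathcal{Y}_{P_2}$ to be disjoint in $\binom{[n-m]}{r-|S|}$, to complete the bound.
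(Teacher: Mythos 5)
Your decomposition $I = I^0 \sqcup I^+$ and the bound $\log\indinvS{J(n,r)}{\pi}\le(1/2+o(1))\log s(n)$ via Lemma~\ref{lemma:indinvS-small} match the paper, and so does the idea of compressing $I^+$ to an orbit transversal $I'$ and invoking Theorem~\ref{thm:ind-small} with $\kappa=1/13$ when $I^+$ is small. The problem lies entirely in your ``large $I^+$'' regime, and it is a real gap, not a detail.

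First, the pattern decomposition alone cannot deliver $\log\indinvL{J(n,r)}{\pi}\le(1/2-\delta)\log s(n)$ for small $m=|\Supp(\pi)|$. For a $3$-cycle there are exactly $2$ non-fixed orbits of subsets of $\Supp(\pi)$, each contributing a stable set in a copy of $J(n-3,\cdot)$, so the decomposition gives $\log\indinvL{J(n,r)}{\pi}\le 2\log s(n-3)$, and pushing this through~\eqref{eq:matroids-upperbound},~\eqref{eq:central-binomial-coefficient-compare}, and~\eqref{eq:matroids-lowerbound} yields $(1/2+o(1))\log s(n)$ --- not $(1/2-\delta)\log s(n)$; the factors cancel with no slack. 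For $\pi$ a product of two disjoint transpositions ($m=4$) the count is worse: there are $6$ non-fixed orbits and the same computation gives $(3/4+o(1))\log s(n)$. Since already $\log\indinvS{J(n,r)}{\pi}$ is $(1/2+o(1))\log s(n)$, the product bound does not beat $(1+o(1))\log s(n)$, which is useless. Your sketch of cross-pattern disjointness constraints would have to supply a concrete, quantified exponential saving, and as written it does not; moreover, the pattern argument never once uses the hypothesis $|I^+|>T$, so even if it went through it would render your thresholding pointless, whereas the lemma is genuinely false without \emph{some} leverage from that side.

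Second, and more structurally, the paper does not attack the large regime by a refined decomposition at all. It thresholds not on $|I^+|$ but on $\lambda(I)$, the number of non-singleton orbits contained in $I$, and uses an \emph{amplification} argument for the ``complex'' case $\lambda(I)>\Lambda=\tfrac{1}{13}\log s(n)$: every large orbit has at least $3$ non-empty subsets, each subset of a stable set is stable, and $I$ is recoverable from any such choice by taking the $\pi$-closure, so each complex $I$ injects into at least $3^\Lambda$ ordinary stable sets of $J(n,r)$. Hence the number of complex $\pi$-invariant stable sets is at most $2\,\ind{J(n,r)}\,3^{-\Lambda}\le s(n)\cdot s(n)^{-\log_2 3/13}$, which already has an exponential saving without ever splitting off $I^0$. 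The non-complex case ($\lambda(I)\le\Lambda$) is then exactly your small case, treated by Theorem~\ref{thm:ind-small} for $I^+$ and Lemma~\ref{lemma:indinvS-small} for $I^0$. Note also that your threshold on $|I^+|$ is the wrong quantity: $|I^+|>T$ does \emph{not} imply $\lambda(I)>\Lambda$ when $\Ord(\pi)$ is large, so the amplification argument cannot simply be grafted onto your case split either. To repair your proof you should replace the threshold on $|I^+|$ by a threshold on the number of orbits and use the amplification argument in the many-orbits case.
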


\begin{proof}
	For a $\pi$-invariant stable set~$I$ in~$J(n,r)$, let us write~$\lambda(I)$ for the number of ``large'' orbits that it contains (i.e.\ orbits consisting of at least two vertices).
	
	Define~$\Lambda \defeq \frac{1}{13} \log s(n)$. Call~$I$ ``complex'' if~$\lambda(I) > \Lambda$. Either the majority of $\pi$-invariant stable sets is complex, or non-complex. We will show that~$\indinv{J(n,r)}{\pi}$ is small either way.
	
	
	Let us first show that the lemma holds if the majority of $\pi$-invariant stable sets is complex. Each complex set gives rise to at least~$3^{\lambda(I)} \ge 3^\Lambda$ stable sets, since we can take any non-empty subset from each large orbit. By the previous paragraph, $I$ can be reconstructed from each such subset. Hence, if at least half of the $\pi$-invariant stable sets is complex, we have
	\begin{equation*}
		\indinv{J(n,r)}{\pi} \le 2 \ind{J(n,r)} 3^{-\tfrac{1}{13} \log s(n)},
	\end{equation*}
	and the lemma follows.
	
	Next, we will show that the lemma holds if the majority of $\pi$-invariant stable sets is non-complex. Recall that each $\pi$-invariant stable set~$I$ can be written as the disjoint union of~$I^0 \in \IIS{J(n,r)}{\pi}$ and~$I^+ \in \IIL{J(n,r)}{\pi}$. We bound the number of~$I^0$ and~$I^+$ associated with non-complex~$I$ in this way separately.
	
	Note that~$I^+$ can be reconstructed from a stable set of size~$\lambda(I^+) = \lambda(I) \le \Lambda$, by restricting~$I^+$ to a set containing a single vertex from each of its orbits. Thus, the number of possible~$I^+$ is at most~$\indcardleq{J(n,r)}{\Lambda}$, which can be bounded by Theorem~\ref{thm:ind-small}. We obtain that, for sufficiently large~$n$, the logarithm of the number of possible~$I^+$ is at most
	\begin{equation}\label{eq:non-complex-L}
		\log \indcardleq{J(n,r)}{\Lambda} \le 0.48 \log s(n).
	\end{equation}
	
	
	An application of Lemma~\ref{lemma:indinvS-small} shows that for sufficiently large~$n$,
	\begin{equation}\label{eq:non-complex-S}
		\log \indinvS{J(n,r)}{\pi} \le 0.51 \log s(n).
	\end{equation}
	
	Suppose that at least half of the $\pi$-invariant stable sets is non-complex, i.e.\ $\lambda(I) \le \Lambda$. Combining~\eqref{eq:non-complex-L} and~\eqref{eq:non-complex-S} shows that
	\begin{equation*}
		\log \indinv{J(n,r)}{\pi} \le 1 + 0.48 \log s(n) + 0.51 \log s(n),
	\end{equation*}
	which proves the lemma.
\end{proof}

\begin{proof}[Proof of Theorem~\ref{thm:matroids}]
	As~$|\{W(M) : M \in \MM(n,r; \pi)\}| = \indinv{J(n,r)}{\pi}$, it follows that
	\begin{equation*}
		|\{W(M) : M \in \MM(n,r;\Sigma_{\ge 3})\}| \le \sum_{\pi \in \Sigma_{\ge 3}'} \indinv{J(n,r)}{\pi}.
	\end{equation*}
	Note that~$|\Sigma_{\ge 3}| < n! \le n^n$ so by an application of Lemma~\ref{lemma:indinv-small}, there is~$\varepsilon > 0$ such that, for sufficiently large~$n$,
	\begin{equation*}
		\log |\{W(M) : M \in \MM(n,r; \Sigma_{\ge 3})\}| \le (1-\varepsilon) \log s(n) + n \log n \le (1-\varepsilon/2) \log s(n)
	\end{equation*}
	for all~$0 \le r \le n$. Theorem~\ref{thm:matroids} thus follows from an application of Theorem~\ref{thm:thin-sufficient}.
\end{proof}

\subsection{Transpositions}\label{ss:transpositions}

Let~$\pi = (e,f) \in \Sigma_2$ be a transposition. Recall that ($\pi$-invariant) sparse paving matroids of rank~$r$ on groundset~$[n]$ are in one-to-one correspondence with ($\pi$-invariant) stable sets in~$J(n,r)$. The main step in the proof of Theorem~\ref{thm:sparsepaving} is showing that we can associate to any $\pi$-invariant stable set in~$J(n,r)$ a large family of stable sets that are not $\pi$-invariant.

The transposition~$\pi$ partitions the vertices of~$J(n,r)$ into four classes, based on the intersection with the set~$\{e,f\}$. Let us write~$V_\emptyset, V_e, V_f, V_{e,f}$ for the vertices in~$J(n,r)$ corresponding to the subscript, and write~$J(n,r)_\xi \defeq J(n,r)[V_\xi]$ for the corresponding induced subgraph.

Each of these graphs is isomorphic to a Johnson graph with smaller parameters, to wit
\begin{equation*}
	J(n,r)_\emptyset \cong J(n-2,r), \quad J(n,r)_e \cong J(n,r)_f \cong J(n-2,r-1),\quad\text{and}\quad J(n,r)_{e,f} \cong J(n-2, r-2).
\end{equation*}

Moreover, there is precisely a matching between the vertices in~$V_e$ and those in~$V_f$. It follows that~$J(n,r)[V_e \cup V_f] \cong J(n-2,r-1) \Box K_2$, the Cartesian product of~$J(n-2,r-1)$ and~$K_2$.

Each $\pi$-invariant stable set is contained in~$V_\emptyset \cup V_{e,f}$, for if~$X \in V_e \cup V_f$ would be in the stable set, then so would~$\pi(X) = X \symm \{e,f\}$. However, $X$ is adjacent to~$X \symm \{e,f\}$, thus contradicting stability.

In fact, not only is every $\pi$-invariant stable set contained in~$V_\emptyset \cup V_{e,f}$, but every $\pi$-invariant stable set in~$J(n,r)$ can be constructed by combining a stable set in~$V_\emptyset$ and a stable set in~$V_{e,f}$. In particular, this means that
\begin{equation*}
	\indinv{J(n,r)}{\pi} = \ind{J(n-2,r-2)} \times \ind{J(n-2,r)}.
\end{equation*}

Clearly~$\indinv{J(n,r)}{\pi} \le \ind{J(n,r)}$. The following lemma gives a family of related bounds.
\begin{lemma}\label{lemma:indinv-kbound}
	For all~$k \ge 0$,
	\begin{equation*}
		\indinv{J(n,r)}{\pi} \le \frac{(r(n-r))^k}{\indcard{J(n-2,r-1)\Box K_2}{k}} \ind{J(n,r)}.
	\end{equation*}
\end{lemma}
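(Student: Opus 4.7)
My plan is to exhibit a map $\Phi$ from pairs $(I, U)$, where $I \in \IIinv{J(n,r)}{\pi}$ and $U$ is a stable set of size $k$ in $J(n-2,r-1) \Box K_2 = J(n,r)[V_e \cup V_f]$, to stable sets in $J(n,r)$, such that every fibre of $\Phi$ has cardinality at most $(r(n-r))^k$. Since the domain has cardinality $\indinv{J(n,r)}{\pi} \cdot \indcard{J(n-2,r-1)\Box K_2}{k}$ and the codomain has cardinality $\ind{J(n,r)}$, the inequality of the lemma is immediate from this.

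The map I would use is $\Phi(I, U) \defeq (I \setminus N(U)) \cup U$, where $N(U)$ denotes the open neighbourhood of $U$ in $J(n,r)$. Both $I \setminus N(U)$ and $U$ are stable, and by construction there are no edges between them, so $T \defeq \Phi(I, U)$ is a stable set of $J(n,r)$. Because $I \subseteq V_\emptyset \cup V_{e,f}$ while $U \subseteq V_e \cup V_f$, one can read off $U = T \cap (V_e \cup V_f)$ and $I \setminus N(U) = T \cap (V_\emptyset \cup V_{e,f})$ directly from $T$, so the only remaining data needed to reconstruct $(I, U)$ from $T$ is the set $I \cap N(U)$.

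The crux is to bound the number of possibilities for $I \cap N(U)$ given $U$. For a single $u \in U$, I would analyse $N(u) \cap (V_\emptyset \cup V_{e,f})$: writing $u = \{e\} \cup y$ with $y \in \binom{[n] \setminus \{e, f\}}{r-1}$ (the case $u \in V_f$ is symmetric), the neighbours of $u$ in $V_\emptyset$ are the $n-r-1$ sets $y \cup \{z\}$ with $z \in [n] \setminus (y \cup \{e, f\})$, while the neighbours in $V_{e,f}$ are the $r-1$ sets $(y \setminus \{x\}) \cup \{e,f\}$ with $x \in y$. A direct symmetric-difference calculation shows each of these two families forms a clique in $J(n,r)$ and that there are no edges between the two families; hence $N(u) \cap (V_\emptyset \cup V_{e,f})$ is a disjoint union of two cliques of sizes $n-r-1$ and $r-1$, and its total number of stable sets is $((n-r-1)+1)\cdot((r-1)+1) = r(n-r)$.

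Since $I \cap N(u)$ is a stable set inside $N(u) \cap (V_\emptyset \cup V_{e,f})$, it takes at most $r(n-r)$ possible values, and consequently the tuple $(I \cap N(u))_{u \in U}$ has at most $(r(n-r))^k$ possible values. Because $I \cap N(U) = \bigcup_{u \in U}(I \cap N(u))$ is determined by this tuple, the number of preimages of any $T$ under $\Phi$ is bounded by $(r(n-r))^k$, which gives the lemma. The main obstacle I anticipate is the structural claim about $N(u) \cap (V_\emptyset \cup V_{e,f})$---in particular verifying that there are no edges between the $V_\emptyset$-side and the $V_{e,f}$-side---after which the rest is bookkeeping.
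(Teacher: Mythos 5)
Your proof is correct and is essentially the same double-counting argument the paper uses: the same map $(I,U)\mapsto (I\setminus N(U))\cup U$, the same recovery of $U$ as $T\cap(V_e\cup V_f)$, and the same observation that $N(u)\cap V_\emptyset$ and $N(u)\cap V_{e,f}$ are cliques of sizes $n-r-1$ and $r-1$, giving at most $r(n-r)$ possibilities for $I\cap N(u)$. Your additional verification that no edges run between the two cliques is a harmless extra detail the bound does not actually require, since a stable set meets each clique in at most one vertex regardless.
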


\begin{proof}
	We will prove the lemma by counting in two ways the number of pairs~$(I,A)$, where~$I$ is a $\pi$-invariant stable set in~$J(n,r)$, and~$A$ is a stable set of cardinality~$k$ in~$J(n,r)[V_e \cup V_f]$.
	
	On the one hand, the number of such pairs is exactly~$\indinv{J(n,r)}{\pi} \times \indcard{J(n-2,r-1)\Box K_2}{k}$.
	
	On the other hand, we show that the number of such pairs is at most~$\ind{J(n,r)} \times (r(n-r))^k$. Together, these two observations prove the lemma.
	
	To prove the second observation, consider the map~$F(I,A) = I\cup\{A\} \setminus N(A)$. Clearly, for each pair~$(I, A)$, $F(I,A)$ is a stable set in~$J(n,r)$. We claim that at most~$(r(n-r))^k$ of the pairs give rise to the same image under~$F$.
	
	Starting from~$F(I,A)$, note that~$A$ is determined by~$A = F(I,A) \cap (V_e \cup V_f)$; here we use that~$I \subseteq V_\emptyset \cup V_{e,f}$, while~$A \subseteq V_e \cup V_f$. It remains to reconstruct~$I \cap N(A)$. A vertex~$X \in V_e \cup V_f$ has exactly~$n-r-1$ neighbours among the vertices in~$V_\emptyset$ (and these vertices form a clique), and it has~$r-1$ neighbours among the vertices in~$V_{e,f}$ (and these form a clique as well). Thus, for each~$X \in A$, $I \cap N(X)$ can take at most~$r(n-r)$ different values. The claim follows by taking the product over all~$X \in A$.
\end{proof}

\begin{proof}[Proof of Theorem~\ref{thm:sparsepaving}]
	We will show that
	\begin{equation}\label{eq:sparsepaving-inequality}
		\sum_{r=0}^n s(n,r; \Sigma_2) = o(s(n)),
	\end{equation}
	which implies Theorem~\ref{thm:sparsepaving}.
	Let~$R \defeq \{0,1\ldots, n\} \cap (n/2-\sqrt{n}, n/2+\sqrt{n})$, and~$R^c \defeq \{0,1,\ldots,n\} \setminus R$.
	We will prove~\eqref{eq:sparsepaving-inequality} by splitting the sum into two parts, corresponding to~$R^c$ and~$R$, respectively, and showing that both parts are~$o(s(n))$.

	From Lemma~\ref{lemma:rank} we know that asymptotically almost all sparse paving matroids have rank in~$R$, and hence
	\begin{equation}\label{eq:sparsepaving-inequality-step1}
		\sum_{r \in R^c} s(n,r;\Sigma_2) \le \sum_{r \in R^c} s(n,r) = o(s(n)).
	\end{equation}

	Next, let~$r \in R$. By Lemma~\ref{lemma:central-binomial-coefficient-deviation}, there is a constant~$c$ such that, for sufficiently large~$n$, ~$\binom{n-2}{r-1} = \frac{r(n-r)}{n(n-1)}\binom{n}{r} \ge c\frac{2^n}{\sqrt{n}}$. Fix any transposition~$\pi \in \Sigma_2$. By Lemma~\ref{lemma:indinv-kbound}, applied here with~$k=1$,
	\begin{equation*}
		s(n,r;\pi) = \indinv{J(n,r)}{\pi} \le \frac{r(n-r)}{2\binom{n-2}{r-1}} \ind{J(n,r)} \le \frac{n^2 \sqrt{n}}{8c 2^n} s(n)
	\end{equation*}
	for all sufficiently large~$n$. As~$|R| \le 2\sqrt{n} + 1$, and~$|\Sigma_2| = \binom{n}{2}$, it follows that
	\begin{equation}\label{eq:sparsepaving-inequality-step2}
		\sum_{r \in R} s(n,r;\Sigma_2) \le \sum_{r\in R} \sum_{\pi \in \Sigma_2} s(n,r;\pi) \le (1+o(1))\frac{n^5}{8c 2^n} s(n).
	\end{equation}
	Combining~\eqref{eq:sparsepaving-inequality-step1} and~\eqref{eq:sparsepaving-inequality-step2} proves~\eqref{eq:sparsepaving-inequality}, and hence Theorem~\ref{thm:sparsepaving}.
\end{proof}

\section{Final remarks}

\subsection{Matroids whose automorphism group is generated by a transposition}

The result in Theorem~\ref{thm:matroids} is not quite sufficient to prove Conjecture~\ref{conj:asymmetric}, as it does not give any information about matroids that have an automorphism group that is generated by a single automorphism. In this section, we will further address this issue.

Throughout this section, $\pi = (e,f)$ will be an arbitrary permutation that exchanges the elements~$e$ and~$f$. We write~$\TT(n; \pi) \defeq \{M \in \MM(n) : \Aut(M) = \langle \pi\rangle\}$, $\TT(n) \defeq \bigcup_{\pi \in \Sigma_2} \TT(n; \pi)$, $t(n,\pi) \defeq |\TT(n;\pi)|$, and $t(n) \defeq |\TT(n)|$.

In view of Theorem~\ref{thm:main-matroids}, the following Conjecture is tantamount to proving Conjecture~\ref{conj:asymmetric}.

\begin{conjecture}\label{conj:transpositions}
	$\lim_{n \to \infty} \frac{t(n)}{m(n)} = 0$.
\end{conjecture}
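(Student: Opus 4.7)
The plan is to imitate the switching argument underlying the proof of Theorem~\ref{thm:sparsepaving}, now in the general matroid setting. The goal is to associate to each matroid $M$ with $\Aut(M) = \langle \pi \rangle$, where $\pi=(e,f)$ is a transposition, a large family of matroids that do not have $\pi$ as an automorphism, in such a way that the associating map is injective.

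By Lemma~\ref{lemma:rank-general}, one can first restrict attention to central ranks $r \in R \defeq [n/2-\beta\sqrt{n}, n/2+\beta\sqrt{n}]$ for some $\beta > \sqrt{\ln 2/2}$, at a cost of only $o(m(n))$. Fix a transposition $\pi=(e,f)$ and a rank $r \in R$. For any $M \in \TT(n,r;\pi)$, the $\pi$-invariance of $W(M)$ forces $W(M) \subseteq V_\emptyset \cup V_{e,f}$, in the notation of Section~\ref{ss:transpositions}. The proposed switching goes as follows: for each $X \in V_e$ that is a basis of $M$ such that declaring $X$ to be a circuit-hyperplane yields a valid matroid $M_X$ with $W(M_X)=W(M) \cup \{X\}$ and $U(M_X)=U(M)$, the resulting $M_X$ satisfies $\pi \notin \Aut(M_X)$: indeed, $\pi(X) = X \symm \{e,f\} \in V_f$ is adjacent to $X$ in $J(n,r)$, so $\pi(X)$ remains a basis of $M_X$ while $X$ has just become a circuit-hyperplane, breaking the symmetry.

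The map $(M,X) \mapsto M_X$ is injective, since $X$ is recovered as the unique element of $W(M_X) \cap (V_e \cup V_f)$, and then $M$ by relaxing $X$ in $M_X$. Writing $k(M)$ for the number of tightenable $X \in V_e$, injectivity together with the fact that the classes $\TT(n,r;\pi)$ are pairwise disjoint as $\pi$ ranges over $\Sigma_2$ yields
\begin{equation*}
	t(n,r) \le \binom{n}{2} \, m(n,r) / \min_{M} k(M),
\end{equation*}
and summing over $r \in R$ gives $t(n) \le O(n^2) \, m(n)/\min_M k(M) + o(m(n))$. This would be $o(m(n))$ as soon as $\min_M k(M) = \omega(n^2)$, which is a very modest growth rate.

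The main obstacle is producing a lower bound on $k(M)$. For sparse paving matroids ($U(M)=\emptyset$), the set $X$ is tightenable precisely when it is not adjacent in $J(n,r)$ to any element of $W(M)$, and the count $k(M)$ is essentially what is bounded by Lemma~\ref{lemma:indinv-kbound}, giving an exponential lower bound and recovering Theorem~\ref{thm:sparsepaving}. For general matroids, tightenability of $X$ is governed not only by $W(M)$ but also by the interaction between $X$ and $U(M)$, which is a more delicate, global condition. A plausible route is to combine the switching with the structural results of~\cite{PvdP2016b}: for almost all $M$ one has $U(M) \in \mathcal{U}_n$, and one would hope to show that for a typical such $U$, only a negligible fraction of $X \in V_e$ is obstructed from tightening by $U(M)$. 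Verifying this uniformly across transposition-invariant matroids---in particular, ruling out the possibility that the constraint $\pi \in \Aut(M)$ forces $U(M)$ into a configuration that kills almost all tightenings---is, I expect, where the real difficulty lies.
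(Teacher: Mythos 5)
You are attempting to prove Conjecture~\ref{conj:transpositions}, which the paper explicitly leaves \emph{open}: it is stated as a conjecture, and no proof of it appears. There is therefore no ``paper's own proof'' to compare against. What the paper does offer is a discussion of this very approach in its final remarks section, together with some unconditional but weaker partial results. Your tightening argument is essentially the route the paper sketches there, via the unnumbered lemma asserting that if for an $f_n$-fraction of matroids in $\MM(n,r;\pi)$ there exists a stable $K_n$-set $X \subseteq V_e$ with $X \cup N(X) \subseteq \BB(M)$, then $m(n;\pi) \le \tfrac{2^{K_n}}{f_n} m(n)$; your $k=1$ switching is the natural analogue, and your threshold $\min_M k(M) = \omega(n^2)$ matches the paper's condition $2^{K_n}/f_n = o\bigl(1/\binom{n}{2}\bigr)$. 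You also name the same two obstructions the paper points out: $V_e \cup V_f$ may meet the non-bases of $M$, and whether a given $X$ can be tightened without disturbing $U(M)$ is a condition that depends globally on $U(M)$.

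The genuine gap is exactly where you flag it: there is no lower bound on $k(M)$, i.e.\ no proof that a typical $\pi$-invariant matroid admits many tightenable $X \in V_e$, or even one. For sparse paving matroids ($U(M)=\emptyset$) this is controlled by Lemma~\ref{lemma:indinv-kbound}, but for general matroids the interaction between $U(M)$ and a candidate $X$ is not understood, and the constraint $\pi \in \Aut(M)$ could in principle concentrate mass on matroids whose $U(M)$ blocks almost every tightening. Without that estimate the argument gives no bound on $t(n)/m(n)$ whatsoever, which is precisely why Conjecture~\ref{conj:transpositions} remains open. What the paper does establish unconditionally is weaker and proceeds by a genuinely different route: Lemma~\ref{lemma:t-liminf} shows only that $\liminf_{n\to\infty} t(n)/m(n) = 0$, via the observation that a $\pi$-invariant matroid is reconstructible from the minors $M\backslash ef$ and $M/ef$ (so $t(n;\pi)$ is at most roughly $m(n-2)^2$, which is incompatible with $t(n) = \Theta(m(n))$ along every subsequence), and Corollary~\ref{cor:limits} deduces the conjecture from a hypothetical refinement of the known asymptotics of $\log m(n)$. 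Your approach, if the missing estimate could be supplied, would prove the full limit; the paper's partial results circumvent the tightening question entirely at the cost of a weaker conclusion.
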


One might try to prove Conjecture~\ref{conj:transpositions} using variant of the proof of Theorem~\ref{thm:sparsepaving} that is geared towards general matroids, rather than sparse paving matroids.

The proof of Theorem~\ref{thm:sparsepaving} is based on the construction of a large number of sparse paving matroids associated with a given $\pi$-invariant sparse paving matroid. This was obtained by forcing an element from set~$V_e \cup V_f$, as defined in Section~\ref{ss:transpositions} into the set of non-bases of the original matroid. In the sparse paving case, this approach works, since all elements in~$V_e \cup  V_f$ are bases of the matroid, each such element has few neighbours among the non-bases in the original matroid.

The situation for general matroids is more complicated in two ways. First, $V_e \cup V_f$ may contain bases. Second, the collection of non-bases in the neighbourhood of some~$X \in V_e \cup V_f$ that we may want to force in the collection of non-bases of the original matroid may be much more complicated, compared to the situation that the original matroid is sparse paving.

The following lemma shows what might happen if we can avoid both complications. Its proof is analogous to the proof of Lemma~\ref{lemma:indinv-kbound}.
\begin{lemma}
	If for an $f_n$-fraction of matroids in~$\MM(n,r; \pi)$ there exists a stable $K_n$-set~$X \subseteq V_e$ with the property that~$X \cup N(X) \subseteq \BB(M)$, then~$m(n; \pi) \le \frac{2^{K_n}}{f_n} m(n)$.
\end{lemma}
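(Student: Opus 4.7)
My plan is to adapt the two-way counting argument of Lemma~\ref{lemma:indinv-kbound}, with matroids playing the role of stable sets. Let $\MM' \subseteq \MM(n,r;\pi)$ denote the subfamily satisfying the hypothesis, so that $|\MM'| \ge f_n\, m(n,r;\pi)$; for each $M \in \MM'$ I would fix one witness $X(M) \subseteq V_e$ and then consider all ordered pairs $(M,Y)$ with $Y \subseteq X(M)$. There are exactly $|\MM'| \cdot 2^{K_n}$ such pairs.

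I would then send each pair to the set system $M_Y$ on $[n]$ whose basis collection is $\BB(M) \setminus Y$, and establish (i) that $M_Y$ is a matroid and (ii) that the map is injective. For (i), I would remove the elements of $Y$ from $\BB(M)$ one at a time: at each step the vertex to be removed is a basis whose Johnson-graph neighbours are still all bases, because the remaining elements of $X$ are untouched by earlier removals (since $X$ is a stable set in $J(n,r)$). This is precisely the inverse of the standard circuit-hyperplane relaxation, and a short basis-exchange verification using $N(X) \subseteq \BB(M)$ rules out the only possible obstruction and confirms that each single step yields a matroid.

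For (ii), the $\pi$-invariance of $M$ is the key. Since $Y \subseteq V_e$ and $\pi$ swaps $V_e$ with $V_f$, we have $\BB(M_Y) \cap V_f = \BB(M) \cap V_f = \pi(\BB(M) \cap V_e) = \pi(\BB(M_Y) \cap V_e) \cup \pi(Y)$, where the union on the right is disjoint. Solving for $\pi(Y)$ gives $\pi(Y) = (\BB(M_Y) \cap V_f) \setminus \pi(\BB(M_Y) \cap V_e)$, so $Y$ (and hence $M = \BB(M_Y) \cup Y$) is determined by $M_Y$ alone. Combining the two counts yields $|\MM'| \cdot 2^{K_n} \le m(n,r)$, and rearranging and summing over $r$ produces the claimed inequality.

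The main obstacle I foresee is the iterated matroidness check in (i). Single-step un-relaxation is a minor piece of folklore, but making the iteration rigorous requires verifying that performing it on one element of $X$ leaves the hypothesis intact for the others. The stable-set assumption on $X$ is precisely what one needs: any two elements of $X$ have symmetric difference at least $4$, so no Johnson neighbour of one element of $X$ lies in $X$, and removing an $X$-element therefore cannot upset the condition $N(X') \subseteq \BB$ for any other $X' \in X$. An induction on $|Y|$ then closes the argument.
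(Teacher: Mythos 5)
Your two-way counting argument is correct and is exactly the adaptation the paper has in mind when it says the proof is ``analogous to the proof of Lemma~\ref{lemma:indinv-kbound}'': fix a witness $X(M)$ for each $M$ in the $f_n$-fraction, sum over $Y\subseteq X(M)$, delete $Y$ from the basis set, and use $\pi$-invariance to recover $\pi(Y)=(\BB(M_Y)\cap V_f)\setminus\pi(\BB(M_Y)\cap V_e)$ and hence $(M,Y)$. The iterated un-relaxation step is also handled correctly: stability of $X$ in the Johnson graph guarantees that deleting one element of $Y$ does not remove a Johnson-neighbour of any other element of $X$, so the hypothesis $N(Z)\subseteq\BB$ propagates through the induction, and the single-step claim (deleting a basis all of whose Johnson-neighbours are bases yields a matroid) is the inverse of circuit-hyperplane relaxation, as you say.

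One point worth flagging: your argument yields $|\MM'|\cdot 2^{K_n}\le m(n,r)$, i.e.\ $m(n,r;\pi)\le \frac{1}{f_n 2^{K_n}}\,m(n,r)$, which after summing over $r$ gives $m(n;\pi)\le \frac{2^{-K_n}}{f_n}\,m(n)$. This is the \emph{correct} inequality; the factor $\frac{2^{K_n}}{f_n}$ in the stated lemma (and in the sentence following it in the paper, which asks for $\frac{2^{K_n}}{f_n}=o(1/\binom{n}{2})$, impossible since that quantity is $\ge 1$) must be a sign error in the exponent. So your proof establishes what the lemma should say, not the literal inequality printed. A second, minor point: the hypothesis is phrased for a single rank $r$ while the conclusion sums over $r$; you should note explicitly that the hypothesis is meant to hold for every relevant $r$ before ``summing over $r$.''
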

If the functions~$K_n$ and~$f_n$ satisfy~$\frac{2^{K_n}}{f_n} = o\left(1/\binom{n}{2}\right)$, then the lemma implies Conjecture~\ref{conj:transpositions}. In view of Lemma~\ref{lemma:rank-general}, this is true even if the lemma holds only for values of~$r$ satisfying~$n/2 - \sqrt{n} \le r \le n/2 + \sqrt{n}$.

Alternatively, we might consider what happens if Conjecture~\ref{conj:transpositions} does not hold.

\begin{lemma}
	Let~$M \in \MM(n,r;\pi)$. $M$ is uniquely determined by~$M\backslash ef$ and~$M/ef$.
\end{lemma}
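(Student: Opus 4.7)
The plan is to reconstruct the rank function of $M$ from the rank functions of $M\backslash ef$ and $M/ef$, using $\pi$-invariance to pin down the values that these two minors do not already force. Since a matroid is determined by its rank function, this will yield the lemma. For $Y\subseteq[n]$ I would split into three cases according to $|Y\cap\{e,f\}|$.

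If $Y\cap\{e,f\}=\emptyset$, then $r_M(Y)=r_{M\backslash ef}(Y)$ directly. If $\{e,f\}\subseteq Y$, write $Y=X\cup\{e,f\}$; then $r_M(Y)=r_{M/ef}(X)+r_M(\{e,f\})$, and $r_M(\{e,f\})=r-r(M/ef)$ is determined because $r=r(M)$ is part of the data (as $M\in\MM(n,r;\pi)$) and $r(M/ef)$ is read off the second minor. Together these two cases pin down $a\defeq r_M(X)$ and $c\defeq r_M(X\cup\{e,f\})$ for every $X\subseteq[n]\setminus\{e,f\}$.

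The remaining case is $|Y\cap\{e,f\}|=1$. By $\pi$-invariance, $r_M(X\cup\{e\})=r_M(X\cup\{f\})$ for each $X\subseteq[n]\setminus\{e,f\}$, so it suffices to determine the common value, which I would show equals $\min(a+1,c)$. The upper bound is immediate from monotonicity of rank and $r_M(X\cup\{e\})\le r_M(X\cup\{e,f\})=c$. For the matching lower bound, the key point is that if $c>a$ then $e\notin\text{cl}_M(X)$: otherwise, since $\pi$ fixes $X$ pointwise, $\pi$-invariance would give $f\in\text{cl}_M(X)$ as well, and hence $r_M(X\cup\{e,f\})=r_M(X)=a$, contradicting $c>a$. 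Consequently $r_M(X\cup\{e\})=a+1$ whenever $c>a$, while the case $c=a$ is squeezed between $a$ and $c=a$ by monotonicity. (Equivalently, one can derive the same conclusion from submodularity $r_M(X\cup\{e\})+r_M(X\cup\{f\})\ge r_M(X\cup\{e,f\})+r_M(X)$ combined with $r_M(X\cup\{e\})=r_M(X\cup\{f\})$.)

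The argument is short, and the only non-routine step is this last case. The main obstacle, such as it is, is spotting that $\pi$-invariance forces $e$ and $f$ to enter $\text{cl}_M(X)$ simultaneously, which is exactly what breaks the otherwise genuine ambiguity in the case $c=a+1$. Once this is in place, $r_M$ is fully determined on $2^{[n]}$ and so is $M$.
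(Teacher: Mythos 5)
Your proof is correct, and it takes a genuinely different route from the paper. The paper reconstructs the matroid via its collection of bases, with a separate preliminary case when $M\backslash ef = M/ef$ (loops/coloops/circuit-cocircuit), and in the main case it characterizes the non-bases meeting $\{e,f\}$ in exactly one element by observing that $X$ is a non-basis iff $X\symm\{e,f\}$ is, and then unwinding this in terms of hyperplanes and circuits of the minors. You instead reconstruct the rank function uniformly, with no case split: the values on $2^{[n]\setminus\{e,f\}}$ and on sets containing both $e,f$ are read off from $M\backslash ef$ and $M/ef$ (using $r=r(M)$, which is given since $M\in\MM(n,r;\pi)$, to pin down $r_M(\{e,f\})$), and the genuinely ambiguous case $|Y\cap\{e,f\}|=1$ is resolved by noting that $\pi$-invariance forces $\text{cl}_M(X)$ to be $\pi$-stable, so $e$ and $f$ lie in $\text{cl}_M(X)$ simultaneously; the submodularity variant combined with $r_M(X\cup\{e\})=r_M(X\cup\{f\})$ works equally well. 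The only place where the potential ambiguity ($c=a+1$) could arise is exactly where $\pi$-invariance is used, which is the right observation. Your rank-function approach is cleaner in that it avoids the case analysis, while the paper's bases-level argument is more explicitly constructive and more in line with the rest of the paper's bookkeeping in terms of (non-)bases.
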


\begin{proof}
	If~$M\backslash ef = M/ef$, then~$\{e,f\}$ is dependent or codependent in~$M$. They are a pair of loops (resp.\ coloops) if and only if~$r(M/ef) = r$ (resp.\ $r(M/ef) = r$). The set $\{e,f\}$ is a circuit in~$M$ if and only if~$r(M/ef) = r-1$, in which case they are a cocircuit as well. $M\backslash ef$ can be uniquely extended by two elements that form both a circuit and a cocircuit, so this extension must be~$M$.
	
	It remains to show that the lemma holds if~$M\backslash ef \neq M/ef$. We will do this by reconstructing the set of bases of~$M$, based on the sets of bases of the given minors. By definition, as~$\{e,f\}$ is both independent and coindependent,
	\begin{equation*}
		\{B \in \BB(M) : \{e,f\} \subseteq B\} = \{B \cup \{e,f\} : B \in \BB(M/ef)\},
	\end{equation*}
	while
	\begin{equation*}
		\{B \in \BB(M) : \{e,f\}\cap B = \emptyset\} = \BB(M\backslash ef).
	\end{equation*}
	It remains to reconstruct the set of bases that contain exactly one of~$e,f$, or equivalently, the set of non-bases that contain exactly one of~$e,f$. In fact, since~$\pi$ is an automorphism of~$M$, it suffices to reconstruct the set of non-bases that contain~$e$, but not~$f$. Call this set~$\KK$. We claim that
	\begin{multline}\label{eq:KK}
		\KK = \left\{X \in \binom{[n]}{r} : e \in X, f \not\in X,\text{ and } X - e + g \not \in \BB(M/ef) \text{ for all } g \in E\setminus X\setminus\{f\}\right\} \\
			\bigcup \left\{X \in \binom{[n]}{r} : e \in X, f\not\in X, \text{ and } X - e - h \not\in\BB(M\backslash ef) \text{ for all } h \in X\setminus\{e\}\right\},
	\end{multline}
	which depends only on~$M\backslash ef$ and~$M/ef$. That~\eqref{eq:KK} holds follows from the observation that~$X$ is a non-basis if and only if~$X \symm \{e,f\}$ is; this implies that~$X$ is a non-basis if and only if $X\cup\{f\}$ is contained in a hyperplane, or~$X\setminus\{e\}$ contains a circuit. In the former case, every $r$-subset of~$X\cup\{f\}$ is a non-basis, and in the latter case every $r$-subset containing~$X\setminus\{e\}$ is a non-basis.
\end{proof}

It follows from the lemma that
\begin{equation}\label{eq:t-r-upper}
	t(n,r; \pi) \le m(n-2,r-2)m(n-2,r) + 3;
\end{equation}
summing over~$r$ and taking logarithms, this implies that, for sufficiently large~$n$,
\begin{equation}\label{eq:t-upper}
	\log t(n; \pi) \le 2\log m(n-2).
\end{equation}

\begin{lemma}\label{lemma:t-liminf}
	$\liminf\limits_{n\to\infty} \frac{t(n)}{m(n)} = 0$.
\end{lemma}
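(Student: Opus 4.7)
The plan is to argue by contradiction: assume $\liminf_{n\to\infty} t(n)/m(n) > 0$, derive a linear recurrence for $\log m(n)$ from the preceding bound~\eqref{eq:t-upper}, and show that solutions to this recurrence cannot match the lower bound~\eqref{eq:matroids-lowerbound} on $\log m(n)$.

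More precisely, suppose there exist $\delta > 0$ and $N_0$ with $t(n) \ge \delta\cdot m(n)$ for all $n \ge N_0$. Since $\TT(n) = \bigcup_{\pi \in \Sigma_2} \TT(n;\pi)$, the union bound gives $t(n) \le \binom{n}{2} \max_{\pi \in \Sigma_2} t(n;\pi)$, so combining with~\eqref{eq:t-upper} yields $\log t(n) \le 2\log m(n-2) + 2\log n$ for all sufficiently large $n$. Together with $\log t(n) \ge \log m(n) - \log(1/\delta)$, this forces
\begin{equation*}
	\log m(n) \le 2\log m(n-2) + C\log n \qquad (n \ge N_0)
\end{equation*}
for some constant $C$.

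Next I would show that this recurrence forces $\log m(n) = O(2^{n/2})$. Setting $A_n \defeq \log m(n)/2^{n/2}$ turns the recurrence into $A_n \le A_{n-2} + C\log n / 2^{n/2}$; since $\sum_n \log n / 2^{n/2}$ converges, iterating over indices of either parity starting from $N_0$ shows that $A_n$ remains bounded, so $\log m(n) = O(2^{n/2})$. On the other hand, combining~\eqref{eq:matroids-lowerbound} with the lower bound in~\eqref{eq:central-binomial-coefficient} yields
\begin{equation*}
	\log m(n) \ge \log s(n) \ge \frac{1}{n}\binom{n}{\lfloor n/2\rfloor} = \Omega\!\left(2^n/n^{3/2}\right).
\end{equation*}
Since $2^{n/2}/n^{3/2} \to \infty$, the quantity $2^n/n^{3/2}$ is not $O(2^{n/2})$; the two bounds on $\log m(n)$ are therefore incompatible, so the hypothesis must fail and $\liminf_{n\to\infty} t(n)/m(n) = 0$.

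The argument is essentially routine once set up—the only mild obstacle is checking that the $O(\log n)$ additive error in the recurrence is harmlessly dominated by the homogeneous growth rate $2^{n/2}$, which it is by a very wide margin. I note that this argument only yields $\liminf = 0$; strengthening it to $\lim = 0$ (i.e.\ Conjecture~\ref{conj:transpositions}) would require a sharper comparison between $\log m(n)$ and $\log m(n-2)$ than~\eqref{eq:matroids-lowerbound} and~\eqref{eq:matroids-upperbound} currently provide, since the gap of a factor of~$2$ between those bounds is precisely what keeps the recurrence from giving a uniform contradiction for every $n$.
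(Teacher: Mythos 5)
Your proof is correct, but it takes a genuinely different route from the paper's. Both arguments begin the same way: assume $t(n)\ge\delta\,m(n)$, use $t(n)\le\binom{n}{2}t(n;\pi)$ together with~\eqref{eq:t-upper} to get $\log m(n)\le 2\log m(n-2)+O(\log n)$. From there they diverge. The paper plugs in \emph{both} the Graham--Sloane lower bound~\eqref{eq:matroids-lowerbound} and the upper bound~\eqref{eq:matroids-upperbound} to sandwich the chain
$\frac1n\binom{n}{\lfloor n/2\rfloor}\le\log m(n)\le(2+o(1))\log m(n-2)\le\frac{4+o(1)}{n-2}\binom{n-2}{\lfloor(n-2)/2\rfloor}$,
which forces the ratio $\log m(k)/\bigl(\tfrac1k\binom{k}{\lfloor k/2\rfloor}\bigr)$ to converge to $1$ when read at index $n$ and to $2$ when read at index $n-2$ --- a contradiction. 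You instead iterate the recurrence to conclude $\log m(n)=O(2^{n/2})$ and contradict the lower bound~\eqref{eq:matroids-lowerbound} alone, since $\log m(n)\ge\log s(n)=\Omega(2^n/n^{3/2})$. Your version is more economical: it dispenses entirely with the nontrivial upper bound~\eqref{eq:matroids-upperbound}, exploiting only that the homogeneous rate $2^{n/2}$ of the recurrence is dwarfed by the doubly-exponential growth of $s(n)$. The paper's version, though it needs more input, is deliberately cast in the ``factor of two'' language that Lemma~\ref{lemma:t-limsup} and Corollary~\ref{cor:limits} then build on, so it reuses its machinery for the later results. Minor polish: you should state that~\eqref{eq:t-upper} holds uniformly over $\pi\in\Sigma_2$ (it does, since~\eqref{eq:t-r-upper} is $\pi$-independent), which is what licenses taking the $\max_\pi$; and the paper in fact has $t(n)=\binom{n}{2}t(n;\pi)$ with equality by symmetry, though your one-sided union bound already suffices.
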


\begin{proof}
	We argue by contradiction. If the lemma fails, there exists~$\varepsilon > 0$ such that~$t(n) \ge \varepsilon m(n)$, for all~$n$ sufficiently large. By symmetry, $t(n) = \binom{n}{2}t(n; \pi)$ for any transposition~$\pi$. It follows that~$\log m(n) = (1+o(1)) \log t(n; \pi)$; combining this with~\eqref{eq:t-upper} and~\eqref{eq:matroids-upperbound}, we obtain
	\begin{equation*}
		\frac{1}{n} \binom{n}{\lfloor n/2\rfloor} \le \log m(n) = (1+o(1)) m(n; \pi) \le 2 \log m(n-2) \le \frac{4+o(1)}{n-2} \binom{n-2}{\lfloor (n-2)/2\rfloor}.
	\end{equation*}
	Noting that~$\binom{n-2}{\lfloor(n-2)/2\rfloor} = (1/4+o(1)) \binom{n}{\lfloor n/2\rfloor}$, it follows that
	\begin{equation*}
		\lim_{n \to \infty} \frac{ \log m(n)}{\frac{1}{n} \binom{n}{\lfloor n/2}} = 1, \qquad\text{while}\qquad \lim_{n \to\infty} \frac{\log m(n-2)}{\frac{1}{n-2}\binom{n-2}{\lfloor n/2\rfloor}} = 2.
	\end{equation*}
	These two statements cannot hold simultaneously, hence the lemma follows.
\end{proof}

By Lemma~\ref{lemma:t-liminf}, if the limit~$\lim_{n\to\infty} \frac{t(n)}{m(n)}$ exists, then it must be equal to~0, and this would imply Conjecture~\ref{conj:transpositions}.

The bounds on the number of matroids from~\eqref{eq:matroids-lowerbound} and~\ref{eq:matroids-upperbound} imply that
\begin{equation*}
	\liminf\limits_{n \to\infty} \frac{ \log m(n)}{\frac{1}{n} \binom{n}{\lfloor n/2\rfloor}} \ge 1
	\qquad\text{and}\qquad
	\limsup\limits_{n \to\infty} \frac{\log m(n)}{\frac{1}{n} \binom{n}{\lfloor n/2\rfloor}} \le 2.
\end{equation*}
The following lemma, whose proof is similar to that of Lemma~\ref{lemma:t-liminf}, shows that if Conjecture~\ref{conj:transpositions} fails, these inequalities are actually equalities.

\begin{lemma}\label{lemma:t-limsup}
	If~$\limsup\limits_{n \to \infty} \frac{t(n)}{m(n)} > 0$,
	then $\liminf\limits_{n \to\infty} \frac{\log m(n)}{\frac{1}{n}\binom{n}{\lfloor n/2\rfloor}} = 1$
	and $\limsup\limits_{n \to\infty} \frac{\log m(n)}{\frac{1}{n}\binom{n}{\lfloor n/2\rfloor}} = 2$.
\end{lemma}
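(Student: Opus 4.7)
The plan is to run the same argument as in Lemma~\ref{lemma:t-liminf}, but apply it along a subsequence on which $t(n)/m(n)$ stays bounded away from zero. Denote $\alpha(n) \defeq \log m(n) / \tfrac{1}{n}\binom{n}{\lfloor n/2\rfloor}$. The bounds~\eqref{eq:matroids-lowerbound} and~\eqref{eq:matroids-upperbound} give $1 - o(1) \le \alpha(n) \le 2 + o(1)$, so it suffices to produce a subsequence along which $\alpha \to 1$ and another along which $\alpha \to 2$.

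The hypothesis supplies $\varepsilon > 0$ and a subsequence $(n_k)$ with $t(n_k) \ge \varepsilon\, m(n_k)$. Fix any transposition $\pi$. Using $t(n_k) = \binom{n_k}{2} t(n_k;\pi)$, together with~\eqref{eq:t-upper}, I get $\log m(n_k) \le 2\log m(n_k-2) + O(\log n_k)$. Dividing by $\tfrac{1}{n_k}\binom{n_k}{\lfloor n_k/2\rfloor}$ and using that
\[
	\frac{n_k}{n_k-2}\cdot\frac{\binom{n_k-2}{\lfloor (n_k-2)/2\rfloor}}{\binom{n_k}{\lfloor n_k/2\rfloor}} \to \frac{1}{4}
\]
yields the key recursion
\[
	\alpha(n_k) \le \tfrac{1}{2}\alpha(n_k - 2) + o(1).
\]
Feeding in the universal upper bound $\alpha(n_k - 2) \le 2 + o(1)$ forces $\alpha(n_k) \le 1 + o(1)$, and the universal lower bound $\alpha(n_k) \ge 1 - o(1)$ then gives $\alpha(n_k) \to 1$. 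This proves $\liminf_{n\to\infty}\alpha(n) = 1$.

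For the $\limsup$, I would turn the recursion around: substituting $\alpha(n_k) = 1 + o(1)$ back into $\alpha(n_k) \le \tfrac{1}{2}\alpha(n_k-2) + o(1)$ yields $\alpha(n_k - 2) \ge 2 - o(1)$, and combining with $\alpha(n_k-2) \le 2 + o(1)$ shows $\alpha(n_k-2) \to 2$. Hence $\limsup_{n\to\infty}\alpha(n) = 2$, completing the lemma.

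I do not expect real obstacles here, since the inequalities are essentially the same as in Lemma~\ref{lemma:t-liminf}; the only subtlety is that the hypothesis gives information only along a subsequence, so both conclusions must be extracted from the single recursion $\alpha(n_k) \le \tfrac{1}{2}\alpha(n_k-2) + o(1)$ by squeezing against the universal bounds in the two opposite directions.
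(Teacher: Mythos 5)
Your argument is correct and is precisely the adaptation the paper has in mind: it runs the same inequalities as in the proof of Lemma~\ref{lemma:t-liminf} (symmetry of transpositions, the bound \eqref{eq:t-upper}, and the ratio $\tfrac{n}{n-2}\binom{n-2}{\lfloor(n-2)/2\rfloor}/\binom{n}{\lfloor n/2\rfloor}\to\tfrac14$), restricted to the subsequence where $t(n_k)\ge\varepsilon m(n_k)$, and then squeezes $\alpha(n_k)\le\tfrac12\alpha(n_k-2)+o(1)$ against the universal bounds $1\le\alpha(n)\le 2+o(1)$ to extract $\alpha(n_k)\to 1$ and $\alpha(n_k-2)\to 2$, instead of deriving a contradiction. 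The only cosmetic point: for the second half it is cleanest to invoke the hard bound $\alpha(n_k)\ge 1$ (which holds for all $n$ by \eqref{eq:matroids-lowerbound} and $m(n)\ge s(n)$) rather than the already-derived $\alpha(n_k)=1+o(1)$, but the conclusion is the same.
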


The following corollary is simply the contrapositive of Lemma~\ref{lemma:t-limsup}; it gives a sufficient condition for Conjecture~\ref{conj:asymmetric}.
\begin{corollary}\label{cor:limits}
	If $\liminf\limits_{n\to\infty} \frac{\log m(n)}{\frac{1}{n}\binom{n}{\lfloor n/2\rfloor}} > 1$, or $\limsup\limits_{n\to\infty} \frac{\log m(n)}{\frac{1}{n}\binom{n}{\lfloor n/2\rfloor}} < 2$, then Conjecture~\ref{conj:asymmetric} holds.
\end{corollary}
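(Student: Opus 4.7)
The plan is to derive Corollary~\ref{cor:limits} as a direct consequence of the contrapositive of Lemma~\ref{lemma:t-limsup}, combined with Theorem~\ref{thm:main-matroids}. Essentially all the work has been done; what remains is careful bookkeeping.

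First, I would record the range in which the relevant liminf and limsup must lie. The lower bound~\eqref{eq:matroids-lowerbound} gives $\log m(n) \ge \log s(n) \ge \frac{1}{n}\binom{n}{\lfloor n/2\rfloor}$, while the upper bound~\eqref{eq:matroids-upperbound} gives $\log m(n) \le \frac{2+o(1)}{n}\binom{n}{\lfloor n/2\rfloor}$. Consequently
\begin{equation*}
    1 \;\le\; \liminf_{n\to\infty} \frac{\log m(n)}{\tfrac{1}{n}\binom{n}{\lfloor n/2\rfloor}} \;\le\; \limsup_{n\to\infty} \frac{\log m(n)}{\tfrac{1}{n}\binom{n}{\lfloor n/2\rfloor}} \;\le\; 2.
\end{equation*}
With this observation in hand, the hypothesis of the corollary — $\liminf > 1$ or $\limsup < 2$ — is precisely the negation of the conjunction "$\liminf = 1$ and $\limsup = 2$" that appears as the conclusion of Lemma~\ref{lemma:t-limsup}.

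Second, I would invoke the contrapositive of Lemma~\ref{lemma:t-limsup}. Since the hypothesis of the corollary negates the conclusion of that lemma, it must also negate its hypothesis, yielding $\limsup_{n\to\infty} t(n)/m(n) \le 0$. As $t(n)/m(n) \ge 0$ for all $n$, this forces $\lim_{n\to\infty} t(n)/m(n) = 0$, which is exactly Conjecture~\ref{conj:transpositions}.

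Finally, I would combine this with Theorem~\ref{thm:main-matroids}. That theorem states that asymptotically almost all matroids are either asymmetric or have automorphism group generated by a single transposition; in the notation of this section, the second class is precisely $\TT(n)$. Since the previous step gives $|\TT(n)|/m(n) = t(n)/m(n) \to 0$, almost all matroids fall into the first class, proving Conjecture~\ref{conj:asymmetric}. There is no real obstacle here: the only point requiring care is verifying that the universal bounds $1 \le \liminf$ and $\limsup \le 2$ make the hypothesis of the corollary the exact Boolean negation of the conclusion of Lemma~\ref{lemma:t-limsup}, so that the contrapositive applies cleanly.
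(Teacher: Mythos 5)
Your argument is correct and matches the paper's own route: Corollary~\ref{cor:limits} is the contrapositive of Lemma~\ref{lemma:t-limsup}, and Conjecture~\ref{conj:transpositions} implies Conjecture~\ref{conj:asymmetric} via Theorem~\ref{thm:main-matroids}. One minor simplification is available: the universal bounds $1 \le \liminf \le \limsup \le 2$ are not actually needed here, since the hypothesis ``$\liminf > 1$ or $\limsup < 2$'' already implies the negation of ``$\liminf = 1$ and $\limsup = 2$'' on its own, which is all the contrapositive requires.
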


We expect that the antecedent in Corollary~\ref{cor:limits} holds in a strong sense, namely that the limit of~$\frac{\log m(n)}{\frac{1}{n} \binom{n}{\lfloor n/2\rfloor}}$ exists.

\subsection{Related conjectures}

Mayhew, Newman, Welsh, and Whittle~\cite{MayhewNewmanWelshWhittle2011} present a number of conjectures that are related to Conjecture~\ref{conj:asymmetric}.
\begin{conjecture}[{\cite[Conjecture~1.6]{MayhewNewmanWelshWhittle2011}}]\label{conj:paving}
	Asymptotically almost all matroids are paving.
\end{conjecture}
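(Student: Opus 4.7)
The plan is to adapt the encoding strategy used in the proof of Theorem~\ref{thm:matroids}: bound the number of non-paving matroids by showing that each admits a description significantly shorter than the generic upper bound on~$\log m(n)$, and then invoke Theorem~\ref{thm:thin-sufficient} (or a direct counting argument). The starting observation is that a rank-$r$ matroid~$M$ is non-paving if and only if it contains a circuit~$C$ of size~$k \le r-1$. In that case, every $r$-subset of~$[n]$ containing~$C$ is a non-basis of~$M$, and none of them is a circuit-hyperplane, since they properly contain the circuit~$C$; so they all lie in~$U(M)$. This simultaneously forces $\binom{n-k}{r-k}$ prescribed sets into~$U(M)$, which is a very strong structural constraint in the typical regime~$r \approx n/2$.

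Concretely, I would proceed as follows. First, partition the set of non-paving matroids in~$\MM(n,r)$ according to the size~$k$ of a smallest circuit, and for each such matroid designate a distinguished smallest circuit~$C_M$ (breaking ties lexicographically). Second, for each pair~$(C,k)$ with~$|C|=k\le r-1$, encode each matroid~$M$ with~$C_M = C$ by the contraction~$M/C \in \MM(n-k, r-k+1)$, together with the bridging data needed to reconstruct~$M$ (notably, which elements of~$[n]\setminus C$ lie in the closure of~$C$ in~$M$, and the circuit structure in the neighbourhood of~$C$). Third, apply~\eqref{eq:matroids-upperbound} to the contraction and use~\eqref{eq:central-binomial-coefficient-compare} to obtain an upper bound of the form $\log m(n-k) \le (2+o(1)) \cdot 2^{-k} n^{-1} \binom{n}{\lfloor n/2\rfloor}$. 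The choice of~$C$ contributes only a polynomial factor~$\binom{n}{k} \le n^k$, which should be dominated by the exponential savings~$2^{-k}$ once~$k$ is not too small; the regime of very small~$k$ (loops, parallel pairs) would need to be treated separately but is easy since the resulting matroids are structurally very restricted.

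The main obstacle is the second step: the contraction~$M/C$ discards considerable information about how elements of~$[n]\setminus C$ interact with~$C$, and in the worst case recovering this bridging data costs as much as the savings gained by contracting. A natural way to address this is to couple the encoding with~\cite[Theorem~1.3]{PvdP2016b}, arguing that~$U(M)$ lies in a small family for all but a vanishing fraction of matroids, so that the bridging data is also heavily constrained. A more fundamental obstacle, however, is that comparing the resulting bound to~$s(n)$ via Theorem~\ref{thm:thin-sufficient} demands control tighter than the factor-of-two gap between~\eqref{eq:matroids-lowerbound} and~\eqref{eq:matroids-upperbound}; as Corollary~\ref{cor:limits} makes clear, closing this gap is itself open, and I would expect the paving conjecture to require genuinely new structural ingredients---perhaps a refined decomposition tailored to non-paving matroids, or a direct bijective argument pairing each non-paving matroid with many paving ones---rather than a straight adaptation of the techniques developed in this paper.
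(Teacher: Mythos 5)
You are attempting to prove a statement that the paper itself presents only as an open conjecture: Conjecture~\ref{conj:paving} is cited verbatim from Mayhew, Newman, Welsh, and Whittle, and the paper offers no proof of it. Its role in the paper is purely logical---it is observed that Conjecture~\ref{conj:paving} together with duality and Theorem~\ref{thm:main-sparsepaving} would imply Conjecture~\ref{conj:asymmetric}---so there is no target argument against which to compare yours. To your credit, your own proposal ends by conceding that the techniques of the paper do not close the gap, which is consistent with the statement's status as a conjecture.

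Two substantive remarks on the sketch itself. First, your plan to invoke Theorem~\ref{thm:thin-sufficient} is a mismatch: that theorem bounds a class of matroids by controlling the number of possible values of~$W(M)$ (circuit-hyperplanes), whereas the structural constraint you extract from a small circuit~$C$ lives entirely inside~$U(M)$ (non-bases that properly contain~$C$ and hence are not circuit-hyperplanes). A constraint on~$U(M)$ gives no leverage on the count of~$W(M)$ values, so Theorem~\ref{thm:thin-sufficient} cannot be applied as stated; you would instead need an analogue of~\cite[Theorem~1.3]{PvdP2016b} that pins down~$U(M)$ for non-paving matroids specifically, which is not supplied by the paper. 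Second, your diagnosis of the deeper obstruction is correct and worth emphasising: because~$\log s(n)$ and~$\log m(n)$ are only known to agree up to a factor of~$2$ (compare~\eqref{eq:matroids-lowerbound} and~\eqref{eq:matroids-upperbound}), any encoding of a non-paving matroid by a matroid on a smaller ground set plus bridging data must beat that factor, and the bridging data in the worst case costs exactly what contraction saves. Corollary~\ref{cor:limits} shows this gap is also the obstruction to the related Conjecture~\ref{conj:transpositions}, so you have correctly located where genuinely new input would be needed.
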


By duality, if almost all matroids are paving, then almost all matroids are sparse paving. This observation, combined with Theorem~\ref{thm:main-sparsepaving}, shows that Conjecture~\ref{conj:paving} immediately implies Conjecture~\ref{conj:asymmetric}.

\begin{conjecture}[{\cite[Conjecture~1.10]{MayhewNewmanWelshWhittle2011}}]\label{conj:rank}
	Asymptotically almost all matroids satisfy~$\frac{n-1}{2} \le r \le \frac{n+1}{2}$.
\end{conjecture}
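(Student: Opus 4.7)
The plan is to attack Conjecture~\ref{conj:rank} via Theorem~\ref{thm:thin-sufficient} applied to the class $\mathcal{M} = \{M \in \MM(n) : r(M) \notin \{\lfloor n/2\rfloor, \lceil n/2\rceil\}\}$, bounding for every off-centre rank~$r$ the number of circuit-hyperplane sets $W(M)$ that can occur and then summing over~$r$.

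First I would try to establish, uniformly in $0 \le r \le n$, an upper bound of the form $\log m(n,r) \le (1+o(1))\frac{1}{n}\binom{n}{r}$. This is a rank-stratified refinement of~\eqref{eq:matroids-upperbound} in which the constant~$2$ is replaced by~$1$; heuristically it says that for almost all matroids of rank~$r$ the non-basis configuration is asymptotically dominated by its circuit-hyperplane part~$W(M)$, much as in the sparse paving analysis carried out in Section~3. Such a bound is strictly stronger than the assertion in Corollary~\ref{cor:limits} that would already suffice for Conjecture~\ref{conj:asymmetric}.

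Assuming such a bound, Lemma~\ref{lemma:central-binomial-coefficient-deviation} gives $\binom{n}{\lfloor n/2\rfloor + k} = (1+o(1))\,\e^{-2k^2/n}\,\binom{n}{\lfloor n/2\rfloor}$ for any $k = o(n^{2/3})$, so, writing $k = r - \lfloor n/2\rfloor$ and combining with~\eqref{eq:matroids-lowerbound}, one would obtain
\begin{equation*}
\log|\{W(M) : M \in \MM(n,r)\}| \le \log m(n,r) \le (1+o(1))\,\e^{-2k^2/n}\,\log s(n).
\end{equation*}
For $|k| \ge \gamma\sqrt{n}$ with any fixed $\gamma > 0$ the right-hand side is at most $(1-\varepsilon)\log s(n)$ for a corresponding $\varepsilon > 0$, sharpening the $\beta > \sqrt{(\ln 2)/2}$ threshold of Lemma~\ref{lemma:rank-general} to any positive~$\gamma$; Theorem~\ref{thm:thin-sufficient} then dispatches all ranks with $|k| \ge \gamma\sqrt{n}$ after summing over~$r$.

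The main obstacle is the regime $|k| = o(\sqrt{n})$, and particularly the case $|k| = O(1)$ that sits just outside the window demanded by the conjecture. For such~$k$ we have $\e^{-2k^2/n} = 1 - O(k^2/n)$, so the saving over $\binom{n}{\lfloor n/2\rfloor}$ is only a factor $1 - O(1/n)$---far too weak for Theorem~\ref{thm:thin-sufficient}, which requires a gain by a constant factor bounded away from~$1$. As the authors observe around Corollary~\ref{cor:limits}, even pinning down the leading constant in $\log m(n)/\binom{n}{\lfloor n/2\rfloor}$ is open, and even knowing it would not settle Conjecture~\ref{conj:rank} in this regime. A fundamentally new ingredient seems necessary: one concrete candidate would be a ``lifting'' map that associates to each rank-$(\lfloor n/2\rfloor+k)$ matroid, $k \ge 1$, a nearly-injective family of exponentially many rank-$\lfloor n/2\rfloor$ matroids. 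Constructing such a lifting---or exhibiting some alternative source of exponential bias towards the two middle ranks---looks to me like the central difficulty in attacking Conjecture~\ref{conj:rank}.
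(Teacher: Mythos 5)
This statement is labelled a \emph{conjecture} in the paper (Conjecture~\ref{conj:rank}, attributed to Mayhew, Newman, Welsh, and Whittle), and the paper offers no proof of it; indeed, the surrounding discussion and Lemma~\ref{lemma:rank-implies} treat it as an open problem whose truth would have consequences for Conjecture~\ref{conj:transpositions}. Your response correctly recognises this: rather than manufacturing a fake argument, you sketch the natural line of attack via Theorem~\ref{thm:thin-sufficient} and honestly identify where it breaks down. That assessment is accurate and worth preserving.

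Your technical analysis of the obstruction is sound. Even granting the optimistic rank-stratified bound $\log m(n,r) \le (1+o(1))\frac{1}{n}\binom{n}{r}$ (stronger than the known~\eqref{eq:matroids-upperbound}, and, as you note, already stronger than what Corollary~\ref{cor:limits} would need for Conjecture~\ref{conj:asymmetric}), Lemma~\ref{lemma:central-binomial-coefficient-deviation} only yields a multiplicative saving of $\e^{-2k^2/n}$, which for $k = O(1)$ is $1 - O(1/n)$ and so never supplies the fixed $\varepsilon > 0$ gap that Theorem~\ref{thm:thin-sufficient} requires. The paper's proved rank-concentration results, Lemmas~\ref{lemma:rank-general} and~\ref{lemma:rank}, stop at the $\beta\sqrt{n}$ window precisely because of this barrier, which matches your diagnosis. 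The one caveat worth adding is that your hypothesised constant~$1$ in $\log m(n,r) \le (1+o(1))\frac{1}{n}\binom{n}{r}$ is itself open; the known constant is~$2$, and as the paper points out around Corollary~\ref{cor:limits}, even determining whether the limit of $\log m(n) \big/ \frac{1}{n}\binom{n}{\lfloor n/2\rfloor}$ exists is unresolved. So the programme you outline has two missing pieces, not one --- the rank-stratified enumeration bound with the right constant, and a mechanism to handle $|k| = O(1)$ --- and your closing remark that a genuinely new ingredient (such as a lifting map between rank strata) is needed is a reasonable summary of where the difficulty lies.
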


Let~$\tilde{m}(n)$ be the number of matroids on ground set~$[n]$ with rank less than~$\frac{n-1}{2}$ or larger than~$\frac{n+1}{2}$. Conjecture~\ref{conj:rank} is equivalent to the statement that~$x_n \to \infty$, where~$x_n \defeq - \log \frac{ \tilde{m}(n)}{m(n)}$. We argue that if~$x_n$ diverges sufficiently fast, then Conjecture~\ref{conj:transpositions} holds. The argument is similar to that used in the proof of Lemma~\ref{lemma:t-liminf}.
\begin{lemma}\label{lemma:rank-implies}
	There is a sequence~$(b_n)$ such that if~$x_n \ge \frac{b_n}{n+2} \binom{n+2}{\lfloor (n+2)/2\rfloor}$ for sufficiently large~$n$, then Conjecture~\ref{conj:transpositions} holds.
\end{lemma}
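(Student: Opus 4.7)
The plan is to adapt the proof of Lemma~\ref{lemma:t-liminf} by sharpening the bound~\eqref{eq:t-upper}, exploiting the fact that under the hypothesis, $\tilde m(n)/m(n)$ decays much faster than any negative exponential in $n$. First I would write $t(n) = \binom{n}{2}\, t(n;\pi)$ for an arbitrary transposition~$\pi$; this uses that the sets $\TT(n;\pi')$ for $\pi' \in \Sigma_2$ are pairwise disjoint (the transposition $\pi'$ being uniquely determined by $\langle \pi' \rangle = \Aut(M)$) and of equal cardinality by the conjugation action of~$S_n$. Together with~\eqref{eq:t-r-upper}, this yields
\begin{equation*}
t(n) \le \binom{n}{2} \sum_{r=0}^{n} m(n-2, r-2)\, m(n-2, r) + O(n^3).
\end{equation*}

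The crux is a rank observation: the ``central'' range $[(n-3)/2, (n-1)/2]$ for ground set~$[n-2]$ has width~$1$, so the conditions ``$r-2$ is central'' and ``$r$ is central'' describe disjoint integer-intervals containing at most two elements each. Thus for all but at most four values of~$r$, both $m(n-2,r-2)$ and $m(n-2,r)$ are bounded by $\tilde m(n-2)$, and for the remaining few values at least one of the two factors is. Combined with $\tilde m(n-2) \le m(n-2)$, this gives
\begin{equation*}
\sum_{r=0}^{n} m(n-2, r-2)\, m(n-2, r) \le (n+1)\, m(n-2)\, \tilde m(n-2) = (n+1)\, m(n-2)^2\, e^{-x_{n-2}},
\end{equation*}
so that $\log(t(n)/m(n)) \le 2\log m(n-2) - \log m(n) - x_{n-2} + O(\log n)$.

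Using \eqref{eq:matroids-upperbound}, \eqref{eq:central-binomial-coefficient-compare}, and \eqref{eq:matroids-lowerbound} (the last via $m(n)\ge s(n)$), I expect to bound $2\log m(n-2) - \log m(n)$ by $O\bigl(\tfrac{1}{n-2}\binom{n}{\lfloor n/2\rfloor}\bigr)$. Choosing any sequence $b_n \to \infty$ (for instance $b_n = \log n$), the hypothesis applied at $n-2$ yields $x_{n-2} \ge \tfrac{b_{n-2}}{n}\binom{n}{\lfloor n/2\rfloor} = \omega\bigl(\tfrac{1}{n-2}\binom{n}{\lfloor n/2\rfloor}\bigr)$, which dominates the upper bound and forces $t(n)/m(n) \to 0$, establishing Conjecture~\ref{conj:transpositions}. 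The main obstacle is the rank observation above, which is what saves a factor of $e^{-x_{n-2}}$ in the bound on $t(n;\pi)$ compared to Lemma~\ref{lemma:t-liminf}; the rest is standard asymptotic bookkeeping, and the exact growth rate required of $b_n$ can be read off from the implicit constant in the $O$-bound on $2\log m(n-2)-\log m(n)$.
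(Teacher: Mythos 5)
Your proof is correct and follows essentially the same route as the paper: invoke the reconstruction bound \eqref{eq:t-r-upper}, exploit the fact that the central rank window for ground set $[n-2]$ has width~$1$ so that $r$ and $r-2$ cannot both be central, and then run the $\log m(n-2)$ vs.\ $\log m(n)$ bookkeeping via \eqref{eq:matroids-upperbound}, \eqref{eq:matroids-lowerbound}, and \eqref{eq:central-binomial-coefficient-compare}. Your organization is slightly different from the paper's: rather than first restricting attention to ranks that are central for $[n]$ (as the paper does with its auxiliary quantity $f(n;\pi)$), you sum over all $r$ at once and use the disjointness of the two shifted central intervals to conclude that at least one factor in $m(n-2,r-2)\,m(n-2,r)$ is always bounded by $\tilde m(n-2)$. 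This saves one factor of $e^{-x_{n-2}}$ rather than the $e^{-2x_{n-2}}$ the paper claims, but that claim is in fact too optimistic for odd $n$ (one of $r,r-2$ can still be central for $[n-2]$), and your uniform single-factor saving is both correct and sufficient. Your proof works with any $b_n\to\infty$, which proves the lemma as stated; the paper's sharper note that one may take $b_n=1$ (or even $b_n=\Omega(\log^2 n/n)$) is a refinement and not needed for the existence claim.
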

\begin{proof}
	Write
	\begin{equation*}
		f(n; \pi)\defeq
		\begin{cases}
			m(n, n/2; \pi) & \text{even~$n$,} \\
			m\left(n, \frac{n-1}{2}; \pi\right) + m\left(n, \frac{n+1}{2}; \pi\right) & \text{odd~$n$.}
		\end{cases}
	\end{equation*}
	There are~$\binom{n}{2}$ transpositions, and hence if Conjecuture~\ref{conj:rank} holds, then~$\frac{\binom{n}{2}f(n;\pi)}{m(n)} \to 0$ implies Conjecture~\ref{conj:transpositions}. We will show that this limit indeed exists and equals~$0$ if~$x_n \to\infty$ sufficiently fast.
	
	An application of~\eqref{eq:t-r-upper}, followed by taking logarithms, gives
	\begin{equation*}
		\log f(n; \pi) \le 1 + 2 \log(m(n-2)) - 2x_{n-2},
	\end{equation*}
	and so, using~\eqref{eq:matroids-upperbound} to bound~$\log m(n-2)$,
	\begin{multline*}
		\log \frac{\binom{n}{2} f(n; \pi)}{m(n)} \le 1 + \log \binom{n}{2} + \frac{4 + o(1)}{n-2}\binom{n-2}{\lfloor (n-2)/2\rfloor} - \frac{1}{n} \binom{n}{\lfloor n/2\rfloor} - 2x_{n-2} \\
			= \frac{o(1)}{n}\binom{n}{\lfloor n/2\rfloor} - 2x_{n-2}.
	\end{multline*}
	If~$x_n \ge \frac{1}{n+2} \binom{n+2}{\lfloor (n+2)/2\rfloor}$ for sufficiently large~$n$, then~$\frac{\binom{n}{2} f(n; \pi)}{m(n)} \to 0$.
\end{proof}

The proof of Lemma~\ref{lemma:rank-implies} shows that~$b_n = 1$ suffices. In fact, careful analysis of the $o(1)$-term that appears in~\eqref{eq:matroids-upperbound} (see e.g.\ \cite{BPvdP2015}), shows that one can take~$b_n = \Omega\left(\frac{\log^2 n}{n}\right)$ as well.

\subsection{An additional conjecture}

Our attempted resolution of Conjecture~\ref{conj:asymmetric} is thwarted by the matroids whose automorphism group is generated by a transposition. Even in the case of sparse paving matroids, for which we have been able to prove the conjecture, the bound on the number of matroids  whose automorphism group is generated by a single transposition is much weaker than the bound on the number of matroids that have an automorphism with larger support. The apparent difficulty in bounding the number of matroids whose automorphism group is generated by a single transposition leads us to making the following conjecture.

\begin{conjecture}\label{conj:aaa_symmetric}
	Asymptotically almost all symmetric matroids have an automorphism group that is generated by a transposition.
\end{conjecture}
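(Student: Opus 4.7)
Let $\text{Sym}(n) \defeq \{M \in \MM(n) : \Aut(M) \neq \{\id\}\}$. The plan is to decompose $\text{Sym}(n) = \TT(n) \sqcup \bigl(\text{Sym}(n)\setminus\TT(n)\bigr)$, control the second piece using Theorem~\ref{thm:matroids}, and lower-bound the first piece by an explicit construction, aiming for
\[
    \frac{|\text{Sym}(n)\setminus\TT(n)|}{|\text{Sym}(n)|} \longrightarrow 0.
\]

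\textbf{Step 1 (the difference equals $\MM(n;\Sigma_{\ge 3})$).} In fact $\text{Sym}(n) \setminus \TT(n) = \MM(n;\Sigma_{\ge 3})$. Indeed, $\MM(n;\Sigma_{\ge 3}) \subseteq \text{Sym}(n)\setminus\TT(n)$ because $\langle\tau\rangle = \{\id,\tau\}$ contains no element of $\Sigma_{\ge 3}$. Conversely, let $M \in \text{Sym}(n)\setminus\TT(n)$. If $\Aut(M)$ has a non-identity element that is not a transposition, that element lies in $\Sigma_{\ge 3}$. Otherwise every non-identity element of $\Aut(M)$ is a transposition and $|\Aut(M)| \ge 3$ (else $\Aut(M) = \langle\tau\rangle$ for some $\tau$), so $\Aut(M)$ contains distinct transpositions $\tau_1,\tau_2$; their product is either a $3$-cycle (when $|\Supp(\tau_1)\cap\Supp(\tau_2)|=1$) or a disjoint product of two transpositions (when the supports are disjoint), and in both cases lies in $\Sigma_{\ge 3}\cap\Aut(M)$. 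Theorem~\ref{thm:matroids} then gives $|\text{Sym}(n)\setminus\TT(n)| = m(n;\Sigma_{\ge 3}) = o(s(n))$; tracking constants in that proof actually sharpens this to $s(n)^{1-\delta+o(1)}$ for some small $\delta > 0$.

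\textbf{Step 2 (lower bound on $|\TT(n)|$).} For each pair $\{e,f\}\subseteq[n]$ and each asymmetric matroid $M'$ on $[n]\setminus\{e,f\}$, set $M\defeq M' \oplus U_{1,2}$, placing the $U_{1,2}$-component on $\{e,f\}$. An asymmetric matroid has no $U_{1,2}$-component (such a component would itself contribute a transposition to the automorphism group), so the $U_{1,2}$-component of $M$ is unique and identifiable; hence any automorphism of $M$ must preserve $\{e,f\}$ setwise and restrict to an automorphism of $M'$, forcing $\Aut(M) = \langle(e,f)\rangle$. Distinct choices of $\{e,f\}$ and $M'$ give distinct $M$ (both are recovered from $M$), and Theorem~\ref{thm:main-sparsepaving} provides $(1-o(1))\,s(n-2)$ asymmetric sparse paving matroids to play the role of $M'$, so
\[
    |\TT(n)| \;\ge\; \binom{n}{2}\,(1-o(1))\,s(n-2).
\]
Combining~\eqref{eq:central-binomial-coefficient-compare} with~\eqref{eq:matroids-lowerbound}--\eqref{eq:matroids-upperbound} gives $\log s(n-2) \le (1/2+o(1))\log s(n)$, so this construction only yields $|\TT(n)| \ge s(n)^{1/2 + o(1)}$, while Step~1 merely guarantees $|\text{Sym}(n)\setminus\TT(n)| \le s(n)^{1-\delta + o(1)}$. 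These two ranges do not overlap.

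\textbf{The main obstacle.} Bridging this gap is the main obstacle, and appears to require simultaneous quantitative progress on both sides. On the upper side, I would try to sharpen Lemmas~\ref{lemma:indinvS-small} and~\ref{lemma:indinv-small} so as to bring $\log\indinv{J(n,r)}{\pi}$ uniformly over $\pi \in \Sigma_{\ge 3}$ well below $\tfrac{1}{2}\log s(n)$, by correlating the bounds on $\IIS{J(n,r)}{\pi}$ and $\IIL{J(n,r)}{\pi}$ instead of bounding them independently, and by exploiting the large-support case of Lemma~\ref{lemma:indinv-sum} more aggressively. On the lower side, I would replace the direct-sum construction by richer gadgets, for instance $(e,f)$-invariant modifications of asymmetric sparse paving matroids already living on $[n]$ (whose asymmetry gives a good chance of forcing $\Aut(M) = \langle(e,f)\rangle$ after modification), or iterated parallel extensions combined with a generic obstruction argument to rule out extra automorphisms. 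Making either route rigorously produce of order $\binom{n}{2}s(n)^{1-o(1)}$ matroids in $\TT(n)$ is, I suspect, essentially the same difficulty that kept the authors from settling the conjecture directly.
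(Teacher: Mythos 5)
This statement is a \emph{conjecture}, not a theorem: the paper leaves it open, and the surrounding discussion explicitly says that the difficulty of bounding the number of matroids whose automorphism group is generated by a transposition is precisely what leads the authors to pose it. So there is no proof of the statement in the paper to compare against; your honest conclusion that your argument does not close is the right one.

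Your Step~1 decomposition, $\text{Sym}(n)\setminus\TT(n)=\MM(n;\Sigma_{\ge 3})$, is correct and is the natural framing (the paper proves $m(n;\Sigma_{\ge 3})=o(s(n))$ in Theorem~\ref{thm:matroids}, so the numerator you want to kill is indeed controlled \emph{relative to} $m(n)$, but you need it controlled \emph{relative to} $|\text{Sym}(n)|$, which the paper cannot yet lower-bound sharply). Your Step~2 direct-sum construction is sound as far as it goes: $M'\oplus U_{1,2}$ with $M'$ asymmetric and on $n-2$ elements does have $\Aut=\langle(e,f)\rangle$, and distinct $(\{e,f\},M')$ give distinct matroids. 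Two caveats. First, the resulting lower bound $\log|\TT(n)|\gtrsim\log s(n-2)\approx\tfrac12\log s(n)$ is, as you say, nowhere near what is needed; the conjecture requires $|\TT(n)|$ to overwhelm $m(n;\Sigma_{\ge 3})$, so one would want $|\TT(n)|$ at essentially the $s(n)^{1-o(1)}$ scale, and the paper contains no construction or counting argument that gets anywhere close. Second, your parenthetical claim that Theorem~\ref{thm:matroids}'s proof ``actually sharpens to $s(n)^{1-\delta+o(1)}$'' is not justified by what is in this paper: Theorem~\ref{thm:thin-sufficient} is imported as a black box from~\cite{PvdP2016b} and is stated only with a qualitative $o(s(n))$ conclusion. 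Its hypothesis involves a polynomial saving on the $W(M)$-count, but the resulting saving on $|\mathcal{M}\cap\MM(n)|$ also depends on how sharply the ``$U(M)\in\mathcal{U}_n$ for all but a vanishing fraction'' step from~\cite[Theorem~1.3]{PvdP2016b} can be quantified, and you have no control over that here. So even the side of the gap you thought you had in hand is weaker than claimed. The ``main obstacle'' paragraph is a fair assessment and matches the authors' own remarks; the paper offers only conditional results in this direction (Lemma~\ref{lemma:t-liminf}, Lemma~\ref{lemma:t-limsup}, Corollary~\ref{cor:limits}), all of which stop short of resolving the conjecture.
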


A positive answer to Conjecture~\ref{conj:aaa_symmetric} would reflect the situation for graphs. It was shown by Erd\H{o}s and R\'enyi~\cite{ErdosRenyi1963} that asymptotically almost all graphs are asymmetric. From their proof it follows that the number of symmetric graphs is dominated by the number of graphs with automorphism group generated by a single transposition.

\bibliographystyle{alpha}
\bibliography{bib}

\end{document}